\newtheorem{thm}{Theorem}[section]
\newtheorem{prop}{Proposition}[section]
\newtheorem{lem}{Lemma}[section]
\newtheorem{Cor}{Corollary}[section]
\newcommand{\be}{\begin{equation}}
\newcommand{\ee}{\end{equation}}
\newcommand\bes{\begin{eqnarray}}
\newcommand\ees{\end{eqnarray}}
\newcommand{\bess}{\begin{eqnarray*}}
\newcommand{\eess}{\end{eqnarray*}}
\title[Reduction of Elementary integrability]{Reduction of Elementary Integrability of Polynomial Vector Fields}
\author[W. Huang, X. Zhang
]
{Wenyong Huang$^{a}$, Xiang Zhang$^b$
 }
\address{$^a$School of Mathematical Sciences, Shanghai Jiao Tong University, Shanghai 200240, People's Republic of China}
\email{sjtuhwy123@sjtu.edu.cn}
\address{$^b$School of Mathematical Sciences, MOE--LSC,  and CAM--Shanghai, Shanghai Jiao Tong University, Shanghai 200240, People's Republic of China} \email{xzhang@sjtu.edu.cn}
\subjclass[2020]{34A34, 37C10, 34C14, 37G05}
\keywords{Elementary integrability; reduction of first integrals; Jacobian multipliers; Galois Theory. }
\begin{document}

\begin{abstract}
Prelle and Singer showed in 1983 that if a system of ordinary differential equations defined on a differential field $K$ has a first integral in an elementrary field extension $L$ of $K$, then it must have a first integral consisting of algebraic elements over $K$ via their constant powers and logarithms. Based on this result they further proved that an elementary integrable planar polynomial differential system has an integrating factor which is a fractional power of a rational function. Here we extend their results and prove that any $n$  dimensional elementary integrable polynomial vector field has $n-1$ functionally independent first integrals being composed of algebraic elements over $K$. {Furthermore, using the Galois theory we prove that the vector field has a rational Jacobian multiplier.}
\end{abstract}

\maketitle

\section{Introduction and statement of the main results}

Consider the autonomous rational vector field
\begin{equation}\label{vector field}
\mathcal{X}:={P_1}(x)\frac{\partial }{{\partial {x_1}}} +  \cdots  + {P_n}(x)\frac{\partial }{{\partial {x_n}}},
\end{equation}
where $x=(x_1,\ldots,x_n)\in\mathbb C^n$, and $P_1,\ldots,P_n \in K:=\mathbb{C}(x)$, the field of rational functions in $x$.
Here we concern with reduction of elementary integrability of the vector field $\mathcal X$. From this point of view it is not necessary to distinguish whether the vector field $\mathcal X$ is rational or polynomial. Since we are working in the elementary functions over $K=\mathbb C(x)$, we first recall some related terminologies on elementary field extensions.

A \textit{derivation} on a ring $\mathcal{R}$ is an operator $\delta : \mathcal{R} \longrightarrow \mathcal{R}$ which satisfies
\[\delta (f+g)=\delta f+\delta g,\quad \delta(fg)=(\delta f)g+f(\delta g),\quad \text{for all} \ f,g \in \mathcal{R}.\]
A \textit{differential field} $(\mathcal{F},\Delta)$ consists of the field $\mathcal{F}$ and the set $\Delta$ of commutative derivations defined on $\mathcal{F}$.
A differential field $(L,\Delta)$ is an {\it elementary field extension} of the differential field $(K,\Delta)$ with  $\Delta:=\left\{\partial_{x_1}= {\partial}/{\partial x_1}, \ldots,\partial_{x_n}= {\partial}/{\partial x_n}\right\}$ a set of the commutative derivations on $K=\mathbb C(x)$, if this extension can be written in the tower form
\[
K=K_0\subset K_1 \subset \cdots \subset K_r=L
\]
such that each field extension satisfies one of the following three properties
\begin{itemize}
    \item $K_{i+1}$ is a finite algebraic extension of $K_i$; or

    \item $K_{i+1}=K_i(t)$, where $t$ satisfies that there exists an $h \in K_i$ such that $\frac{\delta t}{t}=\delta h$ for all $\delta \in \Delta$; or

    \item $K_{i+1}=K_i(t)$, where $t$ satisfies that there exists an $h \in K_i$ such that $\delta t = \frac{\delta h}{h}$  for all $\delta \in \Delta$.
\end{itemize}
An elementary field extension is actually made up of a finite number of smallest field extensions, each of which adds either a finite number of algebraic elements, or an exponential element, or a logarithmic element to the former field. Each $K_i$, $i=1,\ldots,r$, is called a \textit{tower element} of the elementary field extension. The elementary functions of a single variable were introduced by Liouville from $1833$ to $1843$ in his study of the integration of functions, see \cite[Section 3.3]{Zhang2017} for details.
In what follows, denote by $C(K,\Delta)$ the constants field, i.e. the set of elements in $K$ that vanish under all $\delta\in\Delta$. This notation is also similarly applied to $C(L,\Delta)$.

For the rational vector field $\mathcal X$, the smooth functions $H_1,\ldots,H_k$ defined in $\Omega=\mathbb C^n\setminus C_0$, with $C_0$ a zero Lebesgue measure subset of $\mathbb C^n$, are functionally independent first integrals of $\mathcal X$ if $\mathcal X(H_j)\equiv 0$ in $\Omega$, $j=1,\ldots,k$, and their gradients, i.e. the $k$ vector-valued function
\[
\nabla_xH_1,\ldots,\nabla_xH_k,
\]
are linearly independent in $\Omega$ except perhaps a zero Lebesgue measure subset. If $H_j$ is in an elementary field extension of $(K,\Delta)$ and is a first integral of $\mathcal X$, it is called an {\it elementary first integral} of $\mathcal X$.  The vector field $\mathcal X$ is {\it elementary integrable} if it has $n-1$ functionally independent elementary first integrals.
Recall that a {\it Jacobian multiplier} of the vector field $\mathcal X$ is a nonconstant  smooth function $J(x)$ that satisfies $\mathcal X(J)=-J\mbox{\rm div}\mathcal X$, where
$\mbox{\rm div}\mathcal X$ represents the divergence of the vector field $\mathcal X$. Specially, when $n=2$ a Jacobian multiplier is called an {\it integrating factor}.

Prelle and Singer \cite{Prelle-Singer} in 1983 provided a reduction on elementary first integrals, as stated below, which is the main result of their paper.

\noindent{\bf Theorem A.} {\it Let $(L,\Delta)$ be an elementary field extension of the differential field $(K,\Delta)$ with $C(L,\Delta)=C(K,\Delta)$. Assume that $C(L,\Delta)$ is a proper subset of $C(L,\mathcal X)$, that is,  $\mathcal X$ has a nontrivial elementary first integral. Then there exist elements of $L$, $w_0,w_1,\ldots,w_m$, which are algebraic over $K$, and $c_1,\ldots,c_m$ in $C(K,\Delta)$ such that
	\[
	\mathcal X(w_0)+\sum\limits_{i=1}\limits^mc_i\frac{\mathcal X(w_i)}{w_i}=0 \quad \mbox{and} \quad
	\delta(w_0)+\sum\limits_{i=1}\limits^mc_i\frac{\delta(w_i)}{w_i}\ne 0\quad \mbox{for some } \ \delta\in \Delta.
	\]
}

Restricted to a planar polynomial vector field $\mathcal P$, Prelle and Singer \cite{Prelle-Singer} further proved in their Propositions 1 and 2 the next results.

\noindent{\bf Proposition A.} {\it If a planar polynomial vector field $\mathcal P$ has an elementary first integral, then the following statements hold.
\begin{itemize}
\item[$(a)$] The vector field $\mathcal P$ has an algebraic integrating factor. That is, there exists an $R\ne 0$ algebraic over $K$ such that $\mathcal P(R)=-R\mbox{\rm div} \mathcal P	$.
\item[$(b)$] The vector field $\mathcal P$ has either a nontrivial rational first integral, or an algebraic integrating factor $R$ satisfying $R^r\in K$ for some $r\in \mathbb Z$.
\end{itemize}
}
Using the language given in \cite[p.33]{CLY2014}, we can simply say that if a planar polynomial vector field $\mathcal P$ has an elementary first integral, then it has an integrating factor which is a fractional power of a rational function. In 1997, Man and MacCallum \cite{Man-MacCallum} attempted to implement Prelle and Singer's result through symbolic integration. Avellar et al \cite{ADDdaM2007} in 2007 also provided some algorithm to find elementary first integrals of rational second order ordinary differential equations.
In 2019 Christopher et al \cite{CLPW2019} focused on the conditions under which a planar polynomial differential system having an elementary first integral admits also a Darboux first integral among others.

The result on reduction of elementary first integrals was extended to the Liouvillian class by Singer \cite{Singer1992} in 1992 for planar polynomial vector fields. Recall that a {\it Liouvillian field extension} of a differential field is an extension  which can be written in the tower form, as in the elementary field extension, only replacing the transcendental element $t$ in the third item by integration of an element in $K_i$, that is, $K_{i+1}=K_i(t)$ with $t$ satisfying $\delta t\in K_i$ for each $\delta\in \Delta$, see \cite[Section 3.3]{Zhang2016} for details.  The Singer's result can be stated as follows.

\noindent{\bf Theorem B. }{\it Assume that the polynomial differential system
\[
\dot x=P(x,y),\qquad \dot y=Q(x,y),
\]
with $P$, $Q \in \mathbb C[x,y]$, has an analytic solution $(x,y)=(\phi (t),\varphi(t))$ defined in an open subset $V\subset \mathbb C$. If there is a Liouvillian function $F(x,y)$ which is analytic in an open subset containing $\mathcal{O}:=\{(\phi (t),\varphi(t))|\ t\in V\}$, and $F(x,y)|_{\mathcal{O}}=0$, then either $\mathcal{O}$ is an algebraic solution, or the system has an integrating factor of the form
\[
R(x,y):=\left(\int_{(x_0,y_0)}^{(x,y)}U(x,y)dx+V(x,y)dy\right),
\]
where $U$, $V \in \mathbb C(x,y)$ satisfy $\partial_yU=\partial_xV$.}

This theorem shows that a planar Liouvillian integrable polynomial differential system has an integrating factor which can be expressed in an exponential of a rational closed one-form.
Based on the Singer's result, Christopher \cite{Christopher1999} further proved that if a planar polynomial differential system is Liouvillian integrable, it has a Darboux integrating factor. On the Christopher's result, there was a different approach obtained by Zoladek \cite{Zoladek1998} in terms of the monodromy group of the first integral. In 2016, Zhang \cite{Zhang2016} generalized the results of Singer \cite{Singer1992} and Christopher \cite{Christopher1999} to $n$--dimensional polynomial differential systems and provided a reduction via Darboux Jacobian multipliers when the system is Liouvillian integrable.
For more information on elementary and Liouvillian first integrals and their reductions, see for example, the books \cite{CLY2014,Zhang2017} and the articles \cite{B.Jamil,Llibre2003,Duarte,J.Gine,S.Varad}.

The reductions in \cite{Singer1992,Christopher1999} for two dimensional systems and in \cite{Zhang2016} for any finite dimensional systems provide a relatively easier  tool to characterize Liouvillian integrability of a given polynomial differential system. Now, the question is whether the reduction in \cite{Prelle-Singer} on two dimensional polynomial vector fields having an elementary first integral to any finite dimensional polynomial vector fields which are elementary integrable.  

In this paper we present a positive answer to the above question, which consists of the next two theorems, i.e. Theorems \ref{Theorem1} and \ref{Theorem2}.
Our first main result is an extension of Theorem A, which is one of the essential components to address the question.

To state our results, denote by $\overline{F}$ the algebraic closure of a field $F$, which by definition is the field with all the algebraic elements in $F$.
On existence of algebraic closures of a given field, see e.g. \cite[Section 3.4, Theorem II]{Hodge}.

\begin{thm}\label{Theorem1}
Let $(L,\Delta)$ be an elementary field extension of the differential field $(K,\Delta)$, with $C(L,\Delta)=C(K,\Delta)$. Assume that $C(L,\Delta)\subsetneqq C(L,\mathcal{X})$, that is, the vector field $\mathcal{X}$ defined in \eqref{vector field} has at least a nontrivial elementary first integral.
If the vector field $\mathcal{X}$ has $k~(1\leq k \leq n-1)$ functionally independent first integrals $H_1,\ldots,H_k$ in $L$, then there exist $k$ functionally independent first integrals $\widetilde{H}_1,\ldots,\widetilde{H}_k$, which are of the form
\[\widetilde{H}_j = {u_{0,j}} + \sum\limits_{i = 1}^{{n_j}} {{c_{i,j}}\ln {u_{i,j}}},\]
where $u_{0,j},\ldots ,u_{n_j,j}\in \overline{K}\cap L$, $c_{1,j},\ldots,c_{n_j,j}\in \mathbb C$, $j=1,\ldots,k$.
\end{thm}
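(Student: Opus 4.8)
The case $k=1$ is exactly Theorem A: the relation $\mathcal{X}(w_0)+\sum_{i=1}^m c_i\,\mathcal{X}(w_i)/w_i=0$ says precisely that $w_0+\sum_{i=1}^m c_i\ln w_i$ is a first integral of $\mathcal{X}$, the $w_i$ are algebraic over $K$ and lie in $L$, the $c_i$ lie in $C(K,\Delta)=\mathbb{C}$, and the accompanying inequality guarantees the integral is nonconstant. So the entire additional content of the theorem is the passage from one such integral to $k$ \emph{functionally independent} ones. Throughout I would use that, for first integrals, functional independence is equivalent to algebraic independence over $\mathbb{C}$; thus the hypothesis that $H_1,\dots,H_k$ are functionally independent means the field $M:=C(L,\mathcal{X})$ of all first integrals of $\mathcal{X}$ lying in $L$ satisfies $\operatorname{trdeg}_{\mathbb{C}}M\ge k$, and the goal is to exhibit $k$ algebraically independent elements of $M$ of the stated special form.

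The plan is to open up the Prelle--Singer descent rather than to iterate Theorem A as a black box, since a second application of the black box would only return an integral possibly dependent on the first. I would induct on the tower length $r$ in $K=K_0\subset\cdots\subset K_r=L$, processing all $k$ integrals simultaneously. When the top step $K_r=K_{r-1}(t)$ is algebraic I would simply absorb it: the resulting integrals lie in a field algebraic over $K_{r-1}$, the target form already permits $u_{i,j}\in\overline{K}\cap L$, and passing to traces and symmetric functions of the conjugates over $K_{r-1}$ keeps both the count and the independence, so algebraic steps never increase the complexity of the form. The substantive case is a transcendental (exponential or logarithmic) top step. There $\mathcal{X}(t)\in K_{r-1}[t]$ has degree at most one in $t$, so $\mathcal{X}$ preserves $K_{r-1}(t)$, and writing each $H_j$ as a rational function of $t$ over $K_{r-1}$ and comparing coefficients --- exactly the Liouville/Rosenlicht mechanism behind Theorem A, now driven by the derivation $\mathcal{X}$ --- forces $H_j$ into a constrained shape. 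In the logarithmic case $t=\ln u$ with $\mathcal{X}(t)=\mathcal{X}(u)/u\in K_{r-1}$, this peels off a single term $c\,\ln u$, already of the admitted form, and leaves a first integral in a tower of length $r-1$; the exponential case is analogous, and here the hypothesis $C(L,\Delta)=C(K,\Delta)$ is what forbids spurious new constants and pins each coefficient $c_{i,j}$ inside $\mathbb{C}$.

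Feeding the descended integrals into the induction hypothesis and then reassembling the peeled logarithmic contributions would produce $k$ integrals of the form $u_{0,j}+\sum_i c_{i,j}\ln u_{i,j}$ with $u_{i,j}\in\overline{K}\cap L$ and $c_{i,j}\in\mathbb{C}$, as required. The main obstacle is \emph{functional independence}: a priori the descent could collapse the transcendence degree, so the crux is to show that the $k$ integrals surviving each step remain algebraically independent over $\mathbb{C}$. I would control this by tracking $\operatorname{trdeg}_{\mathbb{C}}$ across each descent --- arguing that sending each $H_j$ to its descended piece together with the extracted $\ln$ terms cannot drop the transcendence degree below $k$, using that a peeled $\ln u$ is transcendental over the residual field unless $u$ is already accounted for, and that the trace step in the algebraic case preserves transcendence degree. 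Establishing this independence bookkeeping uniformly across the three extension types, while simultaneously keeping all coefficients inside $C(K,\Delta)=\mathbb{C}$, is the technical heart of the argument; the mere existence of the special form is essentially guaranteed step by step by the Theorem A machinery.
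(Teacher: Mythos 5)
There is a genuine gap, and it sits exactly at what you yourself call the technical heart. Your plan rests on the claim that, for first integrals, functional independence is \emph{equivalent} to algebraic independence over $\mathbb{C}$, so that exhibiting $k$ algebraically independent integrals of the special form would finish the proof. Only one direction of that equivalence is true: functional independence implies algebraic independence over $\mathbb{C}$ (in characteristic zero, via the minimal-degree polynomial relation and its differential). The converse is false, and false precisely in the setting at hand. Take $\mathcal{X}=\partial_x-\partial_y$ on $\mathbb{C}^2$: then $H_1=x+y$ and $H_2=\ln(x+y)$ are both first integrals, both of the admissible form $u_0+\sum c_i\ln u_i$ with $u_i$ algebraic (indeed rational) over $K$, and they are algebraically independent over $\mathbb{C}$ (since $\ln(x+y)$ is transcendental over $\mathbb{C}(x+y)$), yet $\mathrm{d}H_1\wedge \mathrm{d}H_2\equiv 0$, so they are functionally dependent. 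The paper flags exactly this trap in the introduction (with $x+y$ versus $e^{x+y}$): algebraic independence ``does not bear on'' functional independence. Consequently your bookkeeping of $\operatorname{trdeg}_{\mathbb{C}}$ across the descent can never deliver the conclusion of the theorem, which is about functional independence; even if every step of your induction were carried out, at the end you would hold $k$ integrals of the right form with transcendence degree $k$ over $\mathbb{C}$ and no grounds to assert $\mathrm{d}\widetilde H_1\wedge\cdots\wedge \mathrm{d}\widetilde H_k\ne 0$.

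What the paper does instead is track functional independence analytically, by wedge products, through every reduction. At the key step $K\subset K(t)\subset L$ it writes the minimal polynomial of each $H_j$ over $K(t)$; Bruns' lemma makes the coefficients $a_{j,i}$ first integrals in $K(t)$, and differentiating the minimal equations expresses each $\mathrm{d}H_j$ as a combination of the $\mathrm{d}a_{j,i}$, so that if every $k$-tuple of coefficients satisfied $\mathrm{d}a_{n_1}\wedge\cdots\wedge \mathrm{d}a_{n_k}=0$ then $\mathrm{d}H_1\wedge\cdots\wedge \mathrm{d}H_k=0$ would follow --- a contradiction. The same argument is repeated for the irreducible factors of those coefficients in the factorial ring $K[t]$, and again for the $K$-coefficients of those factors, so that at each stage a ``maximal functionally independent group'' of cardinality at least $k$ survives; the case analysis ($\mathcal{X}(t)=0$ or not, $t=\ln v$ or $t=e^v$) then converts these into integrals of the desired shape without ever invoking algebraic independence over $\mathbb{C}$ as a proxy. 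Your outline could in principle be repaired by replacing every transcendence-degree argument with this wedge-product propagation (and note the paper also inducts on $\operatorname{tr}[L:K]$ rather than on tower length, and handles the case where the transcendental element $t$ is itself a first integral by separate corollaries), but as written the proposal's central mechanism proves the wrong kind of independence.
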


We remark that when $k=1$, Theorem \ref{Theorem1} is exactly the main Theorem of Prelle and Singer \cite{Prelle-Singer}. Here we extend their result to any $k\in \{1,\ldots,n-1\}$ number of functionally independent elementary first integrals.
Theorem \ref{Theorem1} ensures existence of a suitable number of functionally independent first integrals consisting of algebraic elements over $K$ for the vector field $\mathcal X$ provided that it has the corresponding number of functionally independent elementary first integrals.

{The difficulty in proving Theorem \ref{Theorem1} is to find functionally independent first integrals of the given simpler form in the elementary field extension. According to Prelle and Singer's work, when the vector field  \eqref{vector field} possesses an elementary first integral $H$, it has a first integral of the form $\widetilde H=w_0+{\rm ln}w_1$ by employing \cite[Proposition 1.2]{RischRObert}. However, the lack of a relationship between  $H$ and $\widetilde H$ is a primary obstacle in proving Theorem \ref{Theorem1}. Specifically, let $H_1$ and $H_2$ be functionally independent elementary first integrals of the vector field $\mathcal X$. Applying \cite[Proposition 1.2]{RischRObert} to $H_1$ and $H_2$ respectively, we can determine $\widetilde H_1$ and $\widetilde H_2$ as first integrals in the forms akin to those stated in Theorem \ref{Theorem1}. Nevertheless, we cannot guarantee whether $\widetilde H_1$ and $\widetilde H_2$ are functionally independent, or even  $\widetilde H_1 \ne \widetilde H_2$.
Therefore, it is imperative to meticulously establish their relationship. We note that the independence of functions pertains to analytic aspects, whereas the algebraic independence does not bear on it. For instance, $x+y$ and $e^{x+y}$ are functionally dependent, yet they are algebraically independent on the field $\mathbb C(x,y)$. Moreover, any two functionally independent rational first integrals of system \eqref{vector field} are algebraically dependent on the field $K$.

Based on Theorem \ref{Theorem1}, we can get a reduction via Jacobian multipliers on any finite dimensional elementary integrable rational vector field $\mathcal X$, defined in \eqref{vector field}, which is a generalization of Propositions 1 and 2  obtained by Prelle and Singer \cite{Prelle-Singer} from two dimension to any finite dimension.

\begin{thm}\label{Theorem2}
If the vector field $\mathcal X$ defined in \eqref{vector field} is elementary integrable, i.e. it has $n-1$ functionally independent elementary first integrals, then the vector field $\mathcal X$ has a rational Jacobian multiplier in $\mathbb{C}(x)$.
\end{thm}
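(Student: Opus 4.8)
The plan is to use Theorem \ref{Theorem1} to replace the elementary first integrals by first integrals assembled from algebraic elements, to manufacture from them a Jacobian multiplier $M$ that is algebraic over $K$ via the Jacobi last--multiplier formalism, and finally to descend $M$ to $\mathbb C(x)$ by a Galois averaging argument. First I would apply Theorem \ref{Theorem1} with $k=n-1$ to get $n-1$ functionally independent first integrals $\widetilde H_1,\dots,\widetilde H_{n-1}$ with $\widetilde H_j=u_{0,j}+\sum_{i=1}^{n_j}c_{i,j}\ln u_{i,j}$, all $u_{i,j}\in\ol K\cap L$ and $c_{i,j}\in\mathbb C$. Set $\omega=dx_1\wedge\cdots\wedge dx_n$, let $i_{\mathcal X}$ denote contraction with $\mathcal X$, and form the $(n-1)$--form $\Theta=d\widetilde H_1\wedge\cdots\wedge d\widetilde H_{n-1}$. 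Since $\mathcal X(\widetilde H_j)=0$ we have $i_{\mathcal X}\Theta=0$, while also $i_{\mathcal X}(i_{\mathcal X}\omega)=0$; on the open set where $\mathcal X\ne 0$ the kernel of $i_{\mathcal X}$ on $(n-1)$--forms is one dimensional, so $\Theta=M\, i_{\mathcal X}\omega$ for a scalar $M$, which is nonzero by functional independence.

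Next I would check $M$ is a multiplier and that it is algebraic. The identities $dM\wedge i_{\mathcal X}\omega=\mathcal X(M)\,\omega$ and $d(i_{\mathcal X}\omega)=(\mbox{\rm div}\,\mathcal X)\,\omega$, combined with $d\Theta=0$ (a wedge of closed forms), give $0=\mathcal X(M)\,\omega+M(\mbox{\rm div}\,\mathcal X)\,\omega$, i.e. $\mathcal X(M)=-M\,\mbox{\rm div}\,\mathcal X$. Moreover each $d\widetilde H_j=du_{0,j}+\sum_i c_{i,j}\,du_{i,j}/u_{i,j}$ has coefficients algebraic over $K$, whereas $i_{\mathcal X}\omega$ has coefficients in $K$, so $M\in\ol K$.

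Now the Galois theory enters. Pass to a finite normal extension $N\supseteq K(M)$; in characteristic zero each $\partial_{x_\ell}$ extends uniquely to $N$, and every $\sigma\in\mathrm{Gal}(N/K)$ commutes with these extensions while fixing $K$ pointwise. Applying $\sigma$ to $\mathcal X(M)=-M\,\mbox{\rm div}\,\mathcal X$ and using $\sigma(P_\ell)=P_\ell$ shows every conjugate $\sigma(M)$ is again a Jacobian multiplier. To extract a rational one I would use the minimal polynomial $M^d+a_{d-1}M^{d-1}+\cdots+a_0$ of $M$ over $K$: applying $\mathcal X$, substituting $M^d=-\sum_{i<d}a_iM^i$, and using the $K$--linear independence of $1,M,\dots,M^{d-1}$ yields $\mathcal X(a_i)=-(d-i)(\mbox{\rm div}\,\mathcal X)\,a_i$ for every $i$. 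In particular the trace coefficient $a_{d-1}=-\sum_{\sigma}\sigma(M)\in K$ satisfies exactly $\mathcal X(a_{d-1})=-(\mbox{\rm div}\,\mathcal X)\,a_{d-1}$, so if $a_{d-1}\ne0$ it is the desired rational Jacobian multiplier.

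The hard part is the degenerate case in which this trace, and every low--weight symmetric function of the conjugates, vanishes — equivalently $M$ is a genuine fractional power of a rational function. To handle it I would symmetrize at the level of the first integrals rather than of $M$: the same commutation of $\sigma$ with $\mathcal X$ shows each $\sigma(\widetilde H_j)$ is a first integral, and its Galois average $\ol H_j=|G|^{-1}\sum_{\sigma}\sigma(\widetilde H_j)=|G|^{-1}\mathrm{Tr}(u_{0,j})+\sum_i c_{i,j}\,|G|^{-1}\ln\!\big(\prod_{\sigma}\sigma(u_{i,j})\big)$ has a differential $d\ol H_j$ whose coefficients, being the differential of the trace $\mathrm{Tr}(u_{0,j})\in K$ plus logarithmic derivatives of the norms $\prod_{\sigma}\sigma(u_{i,j})\in K$, already lie in $K$; feeding $\ol H_1,\dots,\ol H_{n-1}$ into the construction above then produces a multiplier in $\mathbb C(x)$. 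I expect the genuine obstacle to be precisely the preservation of functional independence under this averaging — the same phenomenon flagged after Theorem \ref{Theorem1}, since algebraic relations among the $\sigma(\widetilde H_j)$ need not be analytic ones — so the crux of the proof will be to show, exploiting the freedom afforded by the full complement of $n-1$ independent first integrals, that $d\ol H_1\wedge\cdots\wedge d\ol H_{n-1}\ne0$ (or that a suitable Galois--symmetric variant does not degenerate).
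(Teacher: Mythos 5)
Your first two steps are correct and reproduce, in coordinate-free language, exactly what the paper does: the scalar $M$ defined by $d\widetilde H_1\wedge\cdots\wedge d\widetilde H_{n-1}=M\,i_{\mathcal X}\omega$ is the paper's multiplier $J=\Lambda/P_n$ obtained via Cramer's rule from $\mathcal X(H_j)=0$, and your identity $\mathcal X(a_i)=-(d-i)\,a_i\,\mathrm{div}\,\mathcal X$ for the coefficients of the minimal polynomial of $M$ contains the paper's concluding step, since the paper's $\widetilde J=\frac{1}{N}\,\mathrm{trace}(J)$ is a nonzero constant multiple of your coefficient $a_{d-1}$.

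The genuine gap is the degenerate case you flag at the end and leave open, and it cannot be closed, because in that case the conclusion of Theorem \ref{Theorem2} itself fails, so no symmetrization (of $M$ or of the first integrals) can succeed. Take $\beta\neq 0$, $g:=x^2y^2-1$, and the cubic system $\dot x=P:=-x(x^2y+\beta)$, $\dot y=Q:=2x^2y^2+\beta y-1$. A direct computation shows that $H=x\sqrt{g}+\beta\ln\bigl(xy+\sqrt{g}\bigr)$ satisfies $\partial_x H=Q/\sqrt{g}$ and $\partial_y H=-P/\sqrt{g}$, so $H$ is an elementary first integral (hence this system is elementary integrable) and $M=g^{-1/2}$ is a Jacobian multiplier. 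The nontrivial Galois element sends $\sqrt{g}\mapsto-\sqrt{g}$, so the trace of $M$ vanishes; and your averaged integral degenerates to a constant exactly as you feared, since $\mathrm{Tr}(x\sqrt{g})=0$ while $\mathrm{Nm}\bigl(xy+\sqrt{g}\bigr)=x^2y^2-g=1$, giving $\overline{H}=\tfrac{\beta}{2}\ln 1$. Worse, this system has no rational Jacobian multiplier at all: if $R\in\mathbb C(x,y)$ were one, then $M/R=1/(R\sqrt{g})$ would be a first integral (a ratio of two multipliers), nonconstant because the squarefree polynomial $g$ is not a square in $\mathbb C(x,y)$, and Lemma \ref{algebraic lemma} applied to its minimal polynomial $X^2-1/(R^2g)$ would make $1/(R^2g)$ a nonconstant rational first integral. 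But none exists: in the birational coordinates $(x,u)$ with $u=xy$ the orbits are the graphs $x=\bigl(C-\beta\ln(u+\sqrt{u^2-1})\bigr)/\sqrt{u^2-1}$, and analytic continuation around a large loop in the $u$-plane replaces $\ln$ by $\ln+2\pi i$, so a single orbit carries infinitely many points over a generic value of $u$ and is contained in no algebraic curve.

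So the crux you isolated is fatal rather than technical: the degenerate case is precisely the alternative $R^r\in K$ in statement (b) of Proposition A, which Prelle and Singer could not remove even for $n=2$, and the paper's own proof is silent on it (it never verifies $\widetilde J\neq 0$); your proposal has in effect located the exact point where that argument breaks down. What your computation does establish is the correct salvageable statement: taking $i=0$, where $a_0=\pm\mathrm{Nm}(M)$ never vanishes, the relation $\mathcal X(a_0)=-d\,a_0\,\mathrm{div}\,\mathcal X$ shows that $J'=a_0^{1/d}$ satisfies $\mathcal X(J')=-J'\,\mathrm{div}\,\mathcal X$ with $(J')^d\in\mathbb C(x)$, i.e.\ every elementary integrable vector field has a Jacobian multiplier which is a fractional power of a rational function; without an additional hypothesis excluding the pure fractional-power case, this is the strongest conclusion that can hold.
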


We remark that applying Theorem \ref{Theorem2} to concrete systems, one can get some
necessary conditions on elementary integrability of the systems. For example, the Lorenz system \cite{Lorenz1963}
\begin{equation}\label{eLorenz}
	\dot x=s(y-x),\quad \dot y=rx-y-xz,\quad \dot z=-b z+x y,
\end{equation}
with $s,r,b$ parameters, has the six irreducible invariant algebraic surfaces (Darboux polynomials in terms of the terminology from the Darboux theory of integrability):
\[
\begin{array}{ccc}
  {\rm Darboux \ polynomial}  & {\rm Cofactor}  &  {\rm Parameter}\\
 	x^2-2s z     &  -2s  &  b=2s\\
   x^4-\frac 43 x^2z-\frac 49 y^2-\frac 89 xy+\frac 43 rx^2 & -\frac 43 & b=0, s=\frac 13\\
  y^2+z^2    &  -2  & b=1, r=0\\
	\end{array}
\]
found by Segur \cite{Segur1982} in 1982, and
\[
\begin{array}{ccc}
  {\rm Darboux \ polynomial}  & {\rm Cofactor}  &  {\rm Parameter}\\
  x^4-4x^2 z-4 y^2+8xy-4r x^2-16(1-r)z &  -4   & b=4,s=1\\
  y^2+z^2-rx^2   & -2  & b=1, s=1\\
  x^4-4sx^2z-4s^2y^2+4s(4s-2)xy-(4s-2)^2x^2  & -4s  & b=6s-2, r=2s-1
\end{array}
\]
obtained by K\'us \cite{Kus1983} in 1983.  Llibre and Zhang \cite{LlibreZhang2002} in 2002 completed the classification and proved that the six classes are the only irreducible ones. On this classification, Swinnerton-Dyer \cite{Swinnerton-Dyer} provided a different approach. Recall that a polynomial $f\in\mathbb C[x,y,z]$ is a {\it Darboux polynomial} of the vector field $\mathcal L$ associated to system \eqref{eLorenz} if $\mathcal L(f)=kf$ with $k\in \mathbb C[x,y,z]$, called a {\it cofactor} of $f$.  Correspondingly, $\{(x,y,z)\in\mathbb R^3|\ f(x,y,z)=0\}$ is an {\it invariant algebraic surface}.
According to the Darboux theory of integrability (see e.g. Llibre \cite[Theorem 2.1]{Llibre2004}, Zhang \cite[Proposition 3.5]{Zhang2017}), any rational Jacobian multiplier is of the form $J=f_1^{k_1}\cdot\ldots\cdot f_p^{k_p}$ with $f_j$'s Darboux polynomials and $ k_j\in\mathbb Z$. Moreover,  $J=f_1^{\ell_1}\cdot\ldots\cdot f_p^{\ell_p}$ is a Jacobian multiplier of $\mathcal L$ if and only if $\ell_1 k_1+\ldots\ell_pk_p=-\mbox{\rm div}\mathcal L$, where $k_j$ is the cofactor associated with the Darboux polynomial $f_j$, $j=1,\ldots,p$.
By verifying this last equality using the cofactors given above one can get the necessary and sufficient conditions on the existence of the rational Jacobian multipliers, and consequently the necessary conditions on elementary integrability of system \eqref{eLorenz}.

In the proof of Theorem \ref{Theorem1} we need factorization of polynomials in $K_i(\lambda)$ with $\lambda$ a transcendental element over $K_i$, for instance $\lambda=e^{u}$ or $\lambda=\ln u$ with $u$ in $K_i$, an element in the tower.} Let $P(\lambda)$ and $R(\lambda)$ be polynomials in $\lambda$ with coefficients in $K_i$. If we view $P$ and $R$ as elements in the ring $K_i[\lambda]$ (or the quotient field $K_i(\lambda)$), then by Hodge and Pedoe \cite[Section 3.2]{Hodge} we can explore their algebraic relationships, such as coprimality, division and factorization. Additionally, since they are functions in $x$, we can examine their functional independence and smoothness. In Section \ref{Pre}, we will introduce essential concepts integral to our proofs.

The remaining part of this paper proceeds as follows. Section \ref{Proof of Theorem1} focuses on reduction of functionally independent first integrals, presenting the proof of Theorem \ref{Theorem1}. Based on the results in Section \ref{Proof of Theorem1}, we provide a proof in Section \ref{S3} on reducing the Jacobian multipliers of the elementary integrable vector field $\mathcal X$ that has the functionally independent first integrals being composed of algebraic elements over $K$ by using {the Galois group of the field extensions.} 

\section{Preliminaries on algebra and analysis}\label{Pre}
Let $F$ be an extension field of a field $E$, denoted by $F/E$.
$\alpha \in F$ is called an \textit{algebraic element} over $E$ if there exists a polynomial $P$ with coefficients in $E$ such that $P(\alpha)=0$. Otherwise, we say that $\alpha$ is a {\it transcendental element} over $E$. If all elements of $F$ are algebraic over $E$, we call $F/E$  an {\it algebraic field extension} of $E$ or simply say that $F$ is {\it  algebraic over $E$}.

Let $\xi$ be an element of $F$. Then, the ring $E[\xi]:=\{\alpha_0+\alpha_1\xi+\ldots+\alpha_r\xi^r|\ \alpha_i\in E, 0\leq r\in \mathbb N\}$ is a commutative integral domain. Nothing that $E[\xi]=E$ if and only if $\xi \in E$. We denote by $E(\xi)$ the quotient field of $E(\xi)$, and refer to it as {\it the field obtained by adjoining $\xi $ to $E$}. Furthermore, if $\xi $ is algebraic over $E$, then $E(\xi)=E[\xi]$. Otherwise, $E[\xi]$ is equivalent to an integral domain $E[X]$, with $X$ an indeterminate. Therefore, $E(\xi)$ is equivalent to the field of rational functions in the indeterminate $X$ over $E$. In this situation, the field $E(\xi)$ is called the {\it transcendental extension} of $E$. Obviously, $E(\xi)$ is the smallest extension of $E$ which contains $\xi$. Similarly, we can define the field $E(\xi_1,\ldots,\xi_n)$ (some of $\xi_i$'s may be algebraic over $E$) inductively as follows: $E(\xi_1,\ldots,\xi_r)$ is the field obtained by adjoining $\xi_r$ to $E(\xi_1,\ldots,\xi_{r-1})$. The statements referred above can be found in \cite[Section 3.2]{Hodge}.

Denote by $E[X_1,\ldots,X_k]$ the ring of polynomials in the variables  $X_1,\ldots,X_k$ with coefficients in $E$. Elements $y_1,\ldots,y_k \in F$ are called \textit{algebraically dependent} over $E$ if there exists a nontrivial $f \in E[X_1,\ldots,X_k]$ such that $f(y_1,\ldots,y_k)=0$. Otherwise, we call $y_1,\ldots,y_k$ \textit{algebraically independent}, that is, any polynomial $g\in E[X_1,\ldots,X_k]$ satisfying $g(y_1,\ldots,y_k)=0$ is trivial, i.e., a zero polynomial. A nonalgebraic element of $E$ is {\it transcendental} over $E$.

Let $S=\{y_\alpha:\alpha \in I\}$, with $I$ an index set, be a subset of $F$ which is algebraically independent over $E$. If the cardinality of $S$ is the greatest one among all such subsets, then we call this cardinality the \textit{transcendence degree} of $F$ over $E$, and write the cardinality as ${\rm tr}[F:E]$. As a result, ${\rm tr}[F:E]=0$ means that $F$ is algebraic over $E$.
Furthermore, refer to \cite[Chapter VIII, Exercise 3]{Lang}, we have
\begin{lem}\label{tr gongshi}
Let $U \subset E \subset F$ be extension fields. Then
\[
{\rm tr}[F:U]={\rm tr}[F:E]+{\rm tr}[E:U].
\]
\end{lem}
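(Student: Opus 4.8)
The plan is to build a transcendence basis of $F$ over $U$ out of transcendence bases of the two intermediate extensions and then merely count. First I would choose a transcendence basis $B_1$ of $E$ over $U$ and a transcendence basis $B_2$ of $F$ over $E$, so that $|B_1|={\rm tr}[E:U]$ and $|B_2|={\rm tr}[F:E]$. Since every element of $B_2$ is transcendental over $E$ whereas $B_1\subset E$, the two sets are disjoint and hence $|B_1\cup B_2|=|B_1|+|B_2|$. The goal is then to prove that $B_1\cup B_2$ is a transcendence basis of $F$ over $U$, after which the identity ${\rm tr}[F:U]=|B_1\cup B_2|$ follows immediately from the definition of transcendence degree.

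Two claims must be verified. The first is that $B_1\cup B_2$ is algebraically independent over $U$. Suppose $f(B_1,B_2)=0$ for some polynomial $f$ with coefficients in $U$. I would regard $f$ as a polynomial in the variables corresponding to $B_2$ whose coefficients are polynomials in the variables corresponding to $B_1$; these coefficients then lie in $U[B_1]\subset E$. Because $B_2$ is algebraically independent over $E$, each such coefficient must vanish. But every coefficient is a polynomial expression in the elements of $B_1$ over $U$, and $B_1$ is algebraically independent over $U$, so each coefficient polynomial is the zero polynomial. Hence $f$ is identically zero, which proves the independence.

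The second claim is that $F$ is algebraic over $U(B_1\cup B_2)$. Since $B_2$ is a transcendence basis of $F$ over $E$, the extension $F/E(B_2)$ is algebraic. Since $B_1$ is a transcendence basis of $E$ over $U$, the extension $E/U(B_1)$ is algebraic; adjoining the elements of $B_2$ preserves algebraicity, so $E(B_2)$ is algebraic over $U(B_1)(B_2)=U(B_1\cup B_2)$. By transitivity of algebraic extensions, $F$ is algebraic over $U(B_1\cup B_2)$. Combining the two claims shows that $B_1\cup B_2$ is a transcendence basis of $F$ over $U$, which yields the desired additivity formula.

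The main obstacle is the bookkeeping in the first claim: separating a single polynomial relation over $U$ into the two independence statements by viewing it as a polynomial in one block of variables with coefficients in the other. In addition, the passage from \emph{a} transcendence basis to \emph{the} transcendence degree rests on the fact that all transcendence bases of a given extension share the same cardinality, equal to the maximal cardinality of an algebraically independent subset used in the definition above; this replacement principle is invoked implicitly when I identify $|B_1|$, $|B_2|$, and $|B_1\cup B_2|$ with the respective transcendence degrees.
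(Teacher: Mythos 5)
Your proof is correct, but there is nothing in the paper to compare it against: the paper does not prove this lemma at all, stating it as a known fact with a pointer to \cite[Chapter VIII, Exercise 3]{Lang}. Your argument is precisely the standard solution to that exercise: choose a transcendence basis $B_1$ of $E/U$ and $B_2$ of $F/E$, note they are disjoint, verify algebraic independence of $B_1\cup B_2$ over $U$ by reading a vanishing polynomial relation as a polynomial in the $B_2$-block whose coefficients lie in $U[B_1]\subset E$ (so they vanish by independence of $B_2$ over $E$, and then are zero polynomials by independence of $B_1$ over $U$), and verify that $F$ is algebraic over $U(B_1\cup B_2)$ by transitivity of algebraic extensions. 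Both steps are sound, and you correctly flag the two background facts you invoke: that an algebraically independent set over which the whole extension becomes algebraic is a transcendence basis, and that all transcendence bases of a given extension have the same cardinality, which is the transcendence degree as the paper defines it (a maximal-cardinality algebraically independent set). What your write-up buys, relative to the paper, is self-containedness: it turns an outsourced exercise into a complete proof, at the modest cost of leaning on the basis-invariance theorem, which the paper would anyway need in order for its definition of ${\rm tr}[F:E]$ to interact with bases at all.
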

\noindent Moreover, if $S$ is maximal with respect to the inclusion ordering, we call $S$ a \textit{transcendence base} of $F$ over $E$. From the maximality, it is clear that if $S$ is a transcendence base of $F$ over $E$, then $F$ is algebraic over $E(S)$, see \cite[Chapter VIII]{Lang} for details.

Considering $\theta$ as a transcendental element over $E$, combining the theory of field extension (see \cite[Section 3.2]{Hodge}), it follows the next result.
\begin{prop}\label{extension gonghsi}
Set $\gamma=a+b{\theta^m}$, where $a,b\in E$ and $b\ne 0$, $m$ is non-zero integer. If $\theta$ is transcendental over $E$, then $E(\gamma) =E(\theta)$ if and only if $m =\pm 1$.
\end{prop}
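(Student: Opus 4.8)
The plan is to prove both implications by computing the degree $[E(\theta):E(\gamma)]$ and showing it equals $|m|$; since $E(\gamma)\subseteq E(\theta)$ always holds (as $\gamma=a+b\theta^{m}\in E(\theta)$), the equality $E(\gamma)=E(\theta)$ holds precisely when this degree is $1$.

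For the ``if'' direction, suppose $m=\pm 1$. When $m=1$ we solve $\theta=(\gamma-a)/b\in E(\gamma)$, and when $m=-1$ we note $\gamma-a=b\theta^{-1}\ne 0$ (because $\theta$ is transcendental) and solve $\theta=b/(\gamma-a)\in E(\gamma)$; in either case $E(\theta)\subseteq E(\gamma)$, giving equality.

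For the ``only if'' direction I would show $[E(\theta):E(\gamma)]=|m|$, so that $E(\gamma)=E(\theta)$ forces $|m|=1$, i.e. $m=\pm1$. First I record that $\gamma$ is transcendental over $E$: otherwise $\theta^{m}=(\gamma-a)/b$ would be algebraic over $E$, forcing $\theta$ itself to be algebraic over $E$, a contradiction. Hence $E(\gamma)\cong E(Y)$ with $Y$ an indeterminate, via $\gamma\mapsto Y$. Next I exhibit an explicit annihilating polynomial for $\theta$ over $E(\gamma)$: when $m>0$, from $\gamma=a+b\theta^{m}$ the element $\theta$ is a root of $f(T)=bT^{m}+(a-\gamma)\in E(\gamma)[T]$; when $m<0$, writing $m=-k$ with $k>0$, from $(\gamma-a)\theta^{k}=b$ it is a root of $g(T)=(\gamma-a)T^{k}-b\in E(\gamma)[T]$. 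In both cases the annihilating polynomial has degree $|m|$, so $[E(\theta):E(\gamma)]\le |m|$.

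The crux is to prove these polynomials are irreducible over $E(\gamma)$, so that they are, up to a scalar, the minimal polynomial of $\theta$ and the degree is exactly $|m|$. I would do this via Gauss's lemma over the UFD $E[Y]$, using the isomorphism $\gamma\mapsto Y$ to pass to $bT^{m}+(a-Y)$ and $(Y-a)T^{k}-b$ in $E[Y][T]=E[T][Y]$. Each is primitive as a polynomial in $T$ over $E[Y]$, since $b$ is a unit in $E[Y]$. Viewed instead as a polynomial in $Y$ over $E[T]$, each is linear in $Y$ and primitive: for $f$ the $Y$-leading coefficient is the unit $-1$, while for $g$ any common factor of its $Y$-coefficients $T^{k}$ and $aT^{k}+b$ must divide $b$, a unit. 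A primitive polynomial of degree $1$ is irreducible, so both are irreducible in $E[Y][T]$, hence by Gauss's lemma irreducible in $E(Y)[T]$, and therefore the original polynomials are irreducible over $E(\gamma)$. This yields $[E(\theta):E(\gamma)]=|m|$, and the proposition follows. The main obstacle is precisely this irreducibility step, where the transcendence of $\gamma$ (to pass to the rational function field $E(Y)$) and the reduction to a polynomial that is \emph{linear} in the new indeterminate $Y$ are what make the verification clean.
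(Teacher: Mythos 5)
Your proof is correct, but it takes a genuinely different route from the paper's. The paper's necessity argument is a pure degree count on a rational expression: since $E(\gamma)$ is the quotient field of $E[\gamma]$, one writes $\theta = M/N$ with $M,N\in E[\gamma]$ of $\gamma$-degrees $s\geq 1$ and $r\geq 0$; substituting $\gamma=a+b\theta^{m}$ (the paper restricts attention to $m>0$) turns these into polynomials in $\theta$ of degrees $sm$ and $rm$, and the identity $N\theta=M$ forces $rm+1=sm$, i.e.\ $(s-r)m=1$, hence $m=1$. You instead pin down the exact extension degree $[E(\theta):E(\gamma)]=|m|$: you check that $\gamma$ is transcendental over $E$, transport the problem to $E(Y)$ via $\gamma\mapsto Y$, and prove irreducibility of the degree-$|m|$ annihilating polynomial by Gauss's lemma, exploiting that it is linear and primitive as a polynomial in $Y$ over $E[T]$ --- all of which is sound. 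What your approach buys is a sharper quantitative conclusion (the degree is exactly $|m|$, not merely $>1$ when $|m|>1$) and a uniform, explicit treatment of both signs of $m$, which the paper handles only implicitly after declaring ``we consider only the case $m>0$.'' What the paper's route buys is economy: it needs nothing beyond the definition of $E(\gamma)$ as a quotient field and elementary degree arithmetic --- no minimal polynomials, no UFD or Gauss's lemma machinery --- which is well matched to its role as a small preliminary lemma.
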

\begin{proof}
It is obvious that $E[\gamma] \subseteq E[\theta]$. We consider only the case $m>0$.
{\it Sufficiency}: If $m=1$, $\theta=(\gamma-a)/b\in E[\gamma]$, and so $E[\gamma] = E[\theta]$.
{\it Necessity}:  If $E(\gamma) =E(\theta)$, we write $\theta=M/N$, with $M,N\in E[\gamma]$. Set $\deg M=s\geq 1$ and $\deg N=r\geq 0$. Regarding $M$ as a polynomial in $E[\theta]$, then $\deg M=sm$ and $\deg N=rm$.
The equality $N\theta=M$ forces that $rm+1=sm$. Note that $r,s,m$ are positive integers, it follows that $m=1$.
\end{proof}

Let $A$ be a subring of a commutative ring $B$ and $\beta \in B$.
According to \cite[Section 2.3]{Lang}, if $\beta$ is transcendental over $A$, then $A[\beta]$ is isomorphic to $A[z]$, with $z$ an indeterminate. Consequently, the ring $A[\beta]$ is factorial and principal.

Based on these facts, we consider the field $K=\mathbb C(x_1,\ldots,x_n)$. Let $f \in K[\theta]$ be monic and of degree $r$, i.e., $f=\theta^r+a_{r-1}\theta^{r-1}+ \cdots+a_0$, with $a_i\in K$, $i=1,\ldots,r-1$, and $\deg f=r\geq 1$. Consider a derivation $D$ such that  $(K,D)\subset(K(\theta),D)$ is a differential field extension.

\noindent{Case 1.} $\theta={\rm ln}v$ for some non-constant $v\in K$. Then, $D(f)=(Da_{r-1}+r(Dv/v))\theta^{r-1}+\cdots+(Da_0+a_1(Dv/v))$. Note that $\theta$ is transcendental over $K$ and $Df\in K[\theta]$. Hence, if $D(f)=0$, we have $Da_{r-1}+r(Dv/v)=\cdots=Da_0+a_1(Dv/v)=0$. Otherwise, $\theta$ is algebraic over $K$, which is a contradiction.

\noindent{Case 2.} $\theta=e^v$ for some non-constant $v \in K$. Then $Df=rD(v)\theta^r+(Da_{r-1}+(r-1)D(v))\theta^{r-1}+\cdots+(Da_1+Dv)\theta+Da_0$. Hence, $Df=0$ forces that $rD(v)=Da_{r-1}+(r-1)D(v)=\cdots=Da_0=0$, which means that $0=Dv=Da_0=\cdots=Da_{r-1}$.
\\
See \cite{RischRObert} for similar discussions.



\section{Proof of Theorem \ref{Theorem1}: Reduction of elementary first integrals}\label{Proof of Theorem1}

In this section we establish a reduction on functionally independent elementary first integrals over $K=\mathbb C(x)$ to functionally independent first integrals in simpler forms, which are in fact composed of a finite $\mathbb C$--linear summation of  algebraic elements over $K$ and logarithms of algebraic elements.

The next result, due to Bruns \cite{Bruns1887}, provides a reduction of algebraic integrability to rational integrability. For a proof, see e.g. Forsyth \cite[pp.323-335]{Forsyth1900} or Zhang \cite[Proposition 5.2]{Zhang2017}.

\begin{lem}\label{algebraic lemma}
Let $(F,\mathcal Y)$ be a differential field, and $(L,\mathcal Y)$ be an algebraic extension of $(F,\mathcal Y)$.
 If $H \in {L\setminus F}$ satisfies $\mathcal Y (H)=0$, then all the coefficients of the characteristic polynomial for $H$ belong to $C(F,\mathcal Y)$.
\end{lem}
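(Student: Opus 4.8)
The plan is to exploit the minimal polynomial of $H$ over $F$ and to differentiate its defining relation, extracting the conclusion from the minimality of the degree. Since $H$ is algebraic over $F$, let
\[
p(T) = T^m + a_{m-1} T^{m-1} + \cdots + a_1 T + a_0 \in F[T]
\]
be the minimal polynomial of $H$ over $F$, so that $p(H) = 0$; because $H \in L \setminus F$ we have $m \geq 2$. As the characteristic polynomial of $H$ is a power of this minimal polynomial (in characteristic zero the exponent is just $[L:F(H)]$), it suffices to show that each coefficient $a_0, \ldots, a_{m-1}$ lies in $C(F,\mathcal Y)$; the coefficients of any power are then polynomial expressions in the $a_k$ and hence also constants.

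The central step is to apply the derivation $\mathcal Y$, which by hypothesis acts on all of $L$, to the identity $p(H) = 0$. Since $\mathcal Y(H) = 0$, the Leibniz rule yields $\mathcal Y(H^k) = k H^{k-1}\mathcal Y(H) = 0$ for every $k$, so every term built purely from $H$ is annihilated and only the derivatives of the coefficients survive:
\[
0 = \mathcal Y\big(p(H)\big) = \sum_{k=0}^{m-1} \mathcal Y(a_k)\, H^k.
\]
This is a polynomial relation in $H$ of degree at most $m-1$ whose coefficients $\mathcal Y(a_k)$ all lie in $F$. Since the minimal polynomial of $H$ over $F$ has degree exactly $m$, no nonzero polynomial over $F$ of degree smaller than $m$ can vanish at $H$; therefore $\mathcal Y(a_k) = 0$ for every $k = 0, \ldots, m-1$, which is precisely the statement that $a_k \in C(F,\mathcal Y)$.

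I do not expect a serious obstruction here: the argument is short once one decides to differentiate the minimal polynomial, and the only points demanding care are bookkeeping. The first is the identification of the characteristic polynomial with a power of the minimal polynomial, handled above. The second, should one prefer to read the characteristic polynomial as the product $\prod_i (T - H_i)$ over the conjugates $H_i$ of $H$, is to verify that each conjugate is itself a constant: once the $a_k$ are known to be constants, differentiating $p(H_i) = 0$ gives $p'(H_i)\,\mathcal Y(H_i) = 0$, and separability in characteristic zero forces $p'(H_i) \neq 0$, whence $\mathcal Y(H_i) = 0$ and the elementary symmetric functions of the $H_i$ are constants as well. The genuine content of the lemma is thus the minimality argument in the second paragraph.
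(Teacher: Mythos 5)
Your proof is correct and follows essentially the same route as the paper's: differentiate the minimal relation $p(H)=0$, use $\mathcal Y(H)=0$ to kill the terms involving derivatives of powers of $H$, and invoke minimality of the degree to force $\mathcal Y(a_k)=0$ for all $k$. The extra bookkeeping you supply (the characteristic polynomial being a power of the minimal polynomial, and the constancy of the conjugates) is a sound clarification of a point the paper glosses over by using ``characteristic polynomial'' where it really argues with the minimal polynomial, but it does not change the substance of the argument.
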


\begin{proof} Since its proof is short, for completeness we present it here. By the assumption, let
 \[
 P(X) = {X^r} + {a_{r - 1}}{X^{r - 1}} +  \cdots  + a_1 X+ {a_0}
 \]
 be the characteristic polynomial of $H$, where $a_1,\ldots,a_{r-1} \in F$. Then $P(H)=0$.
 Applying the derivation $\mathcal{Y}$ on the both sides of the equality $P(H)=0$ yields
 \[
   \mathcal Y \left(a_{r - 1} \right){H^{r - 1}} +  \cdots +  \mathcal Y \left(a_1\right) H +  {\mathcal Y (a_0)} =0.
 \]
 By minimality of the degree of the characteristic polynomial $P(X)$, it follows from the last equality that $\mathcal Y(a_j) =0$, $j=1,\ldots,r-1$.
That is, $ a_j  \in C(F,\mathcal{Y})$.
\end{proof}

Instead of proving Theorem \ref{Theorem1} directly, we prove a seemly stronger statement as follows. The main idea treating in this way is from \cite{Prelle-Singer}.

\begin{thm}\label{Thm Stronger}
Let $(K,\Delta)$, $(L,\Delta)$ and $\mathcal{X}$ be as those in Theorem \ref{Theorem1}. Assume that there exist $u_{0,j},\ldots,u_{m_j,j} \in L$, and $d_{1,j},\ldots,d_{m_j,j} \in \mathbb{C}$, $j=1,\ldots,k$, such that
\begin{itemize}
    \item $\mathcal{X}(H_j)=0$, $j=1,\ldots,k$,
        where $H_j={u_{0,j}} + \sum\limits_{i = 1}^{{m_j}} {{d_{i,j}}\ln {u_{i,j}}}$;

    \item $\delta (H_j)\ne 0$ for some $\delta \in \Delta$, $j=1,\ldots,k$;

    \item $H_1,\ldots, H_k$ are functionally independent.
\end{itemize}
Then there exist $w_{0,j},\ldots,w_{\widetilde m_j,j} \in L$, $j=1,\ldots,k$, which are algebraic over $K$, and $c_{0,j},\ldots,c_{\widetilde m_j,j} \in \mathbb C$, such that 
\begin{itemize}
 \item $\mathcal{X}(\widetilde H_j)=0$, $j=1,\ldots,k$,
        where $\widetilde H_j={w_{0,j}} + \sum\limits_{i= 1}^{{\widetilde m_j}} {{c_{i,j}}\ln {w_{i,j}}}$;

    \item $\delta (\widetilde H_j)\ne 0$ for some $\delta \in \Delta$, $j=1,\ldots,k$;

    \item $\widetilde H_1,\ldots, \widetilde H_k$ are functionally independent.
\end{itemize}
\end{thm}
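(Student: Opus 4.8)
The plan is to argue by induction on the height $r$ of the tower $K=K_0\subset\cdots\subset K_r=L$, reducing the ingredients of all $k$ first integrals one tower level at a time. Write $F:=K_{r-1}$, so that $L=F(t)$ where $t$ is algebraic, exponential ($\mathcal{X}(t)=t\,\mathcal{X}(h)$), or logarithmic ($\mathcal{X}(t)=\mathcal{X}(h)/h$) over $F$. The base case is the one in which $L$ is algebraic over $K$: then every $u_{i,j}$ already lies in $\overline{K}\cap L$ and we may take $\widetilde H_j=H_j$. For the inductive step it suffices to replace $H_1,\dots,H_k$ by first integrals $H_1',\dots,H_k'$ of the same logarithmic shape, still nontrivial and still functionally independent, but whose leading terms and log-arguments lie in $F$; the conclusion then follows from the inductive hypothesis applied to the shorter tower $F$, an algebraic top level being descended in a way that preserves algebraicity over $K$, so that only the exponential and logarithmic levels require the analytic argument below.

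For a single first integral this level reduction is the computation underlying Theorem A. Since $t$ is transcendental over $F$, the ring $F[t]$ is a unique factorization domain, so each log-argument $u_{i,j}$ factors as a unit of $F$ times monic irreducibles of $F[t]$; splitting the logarithms accordingly rewrites $H_j$ as $u_{0,j}+\sum_{Q}e_{Q,j}\ln Q+\sum_{P}f_{P,j}\ln P$, with $Q\in F$ and $P$ monic irreducible in $F[t]$. Applying $\mathcal{X}$ and using the coefficient analysis of Section \ref{Pre} (Cases 1 and 2) together with $C(L,\Delta)=C(K,\Delta)$, one checks that for each irreducible $P\neq t$ the logarithmic derivative $\mathcal{X}(P)/P$ carries a simple pole in $t$ that nothing else in the identity $\mathcal{X}(H_j)=0$ can cancel, forcing $f_{P,j}=0$; the remaining term $\ln t$ equals $h\in F$ in the exponential case and is absorbed in the logarithmic case, and a parallel partial-fraction argument brings the leading term into $F$. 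This is precisely the content of \cite[Proposition 1.2]{RischRObert}, while the algebraic top-level case is instead handled by Lemma \ref{algebraic lemma} and Galois descent, which keep the ingredients algebraic over $K$.

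The main obstacle, as flagged in the introduction, is not the shape reduction but the preservation of functional independence: descending the $H_j$ one at a time is not canonical and may collapse $H_1',\dots,H_k'$ to functionally dependent, or even equal, first integrals. To control this I would run a secondary induction on $k$, with base case $k=1$ given by Theorem A, and carry the whole tuple through each descent step rather than reducing the integrals in isolation. The decisive point is the dichotomy stressed in the introduction: functional independence is the analytic condition $dH_1\wedge\cdots\wedge dH_k\not\equiv0$, while the descent is purely algebraic, so an algebraic relation among the $H_j'$ says nothing a priori about their Jacobian. Accordingly, if the reduced tuple satisfied an analytic relation $\Phi(H_1',\dots,H_k')\equiv0$, I would differentiate it along the derivations in $\Delta$ and feed in the explicit expression of each $H_j'$ in terms of $H_j$ produced by the descent, aiming either to contradict the assumed independence of $H_1,\dots,H_k$ or to exhibit a genuinely lower-level first integral replacing a redundant $H_j'$. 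Converting the algebraic output of the level reduction into this analytic control of the Jacobian rank $(\partial H_i'/\partial x_\ell)$ is where the delicate work lies, and is the step I expect to demand the most care.
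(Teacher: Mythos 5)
There is a genuine gap --- in fact two, and they sit exactly at the two places where the paper's proof does its real work.

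First, your level-reduction mechanism is incorrect at its key step. From $\mathcal X(H_j)=0$ and the partial-fraction analysis in $F(t)$ one does \emph{not} get that $f_{P,j}=0$ for every monic irreducible $P\in F[t]$: the pole argument only yields a dichotomy --- either $f_{P,j}=0$, or $P\mid\mathcal X(P)$ (which in the logarithmic case forces $\mathcal X(P)=0$, and in the exponential case $\mathcal X(P)=(\deg P)\,\mathcal X(v)\,P$). The second branch, in which the irreducible factor is itself (essentially) a first integral of $\mathcal X$, contributes no pole at all and so cannot be excluded; it is not exotic but is precisely where the substance of the theorem lies. Concretely, for $\mathcal X=\partial_{x_3}$ on $\mathbb C^3$ with coordinates $(x,y,z)$, the element $t=\ln x$ is transcendental over $K$, $P=t-y$ is monic irreducible in $K[t]$ with $\mathcal X(P)=0$, and $H=\ln(\ln x-y)$ is a nontrivial elementary first integral with $f_P=1\neq0$. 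Your argument would force $f_P=0$, i.e.\ it would conclude that every elementary first integral descends with its logarithms deleted, which is false; the correct conclusion requires manufacturing \emph{new} first integrals (here $x$ and $y$) out of the coefficients of $P$. Handling this branch is exactly what Propositions \ref{Prop2}--\ref{Prop5} of the paper do: from $\mathcal X(a)=0$ one only gets $Q_i\mid\mathcal X(Q_i)$; degree considerations then give $\mathcal X(Q_i)=0$ or $\mathcal X(Q_i)=q_i\mathcal X(v)Q_i$, and the desired integrals are built from the coefficients of the $Q_i$, from $h_a$, and from combinations such as $A_{q-1}+q\ln v$ or $A_0^{-1}e^{qv}$, via Corollaries \ref{Cor1} and \ref{Cor2}.

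Second, the preservation of functional independence --- which you correctly single out as the crux --- is left as an intention rather than a proof (``aiming either to contradict \ldots or to exhibit \ldots''). The paper's mechanism is concrete and runs in the opposite direction from the one you sketch: instead of manipulating the representations $u_{0,j}+\sum_i d_{i,j}\ln u_{i,j}$, it takes the minimal polynomial of each $H_j$ over $K(t)$; by Lemma \ref{algebraic lemma} its coefficients are first integrals lying in $K(t)$, and since each ${\rm d}H_j$ is a linear combination of the differentials of these coefficients (equation \eqref{dengshi k1 df}), the hypothesis ${\rm d}H_1\wedge\cdots\wedge{\rm d}H_k\neq0$ forces the existence of $k$ functionally independent elements among the coefficients. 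The same lever (the old objects are locally functions of the new ones, so independence pushes down) is then reused through the factorization $a=h_a\prod Q_i^{r_i}$ and through the coefficients of the $Q_i$ as polynomials in $t$. Your plan of assuming a relation $\Phi(H_1',\dots,H_k')\equiv0$ and differentiating it has no such lever, because your $H_j'$ are never exhibited as quantities on which the original $H_j$ depend functionally; without that, the algebraic output of the level reduction gives no control on the Jacobian rank, and the secondary induction on $k$ you propose has nothing to run on.
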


We remark that the hypotheses of Theorem \ref{Thm Stronger} certainly hold if the hypotheses of Theorem \ref{Theorem1} are satisfied. Conversely, if the assumptions of Theorem \ref{Thm Stronger} hold, then, {as did in \cite{Prelle-Singer} by Prelle and Singer}, in some elementary extension of $L$, $\widetilde{H}_j = {u_{0,j}} + \sum\limits_{j = 1}^{{\tilde m_j}} {{c_{i,j}}\ln {u_{i,j}}}$, $j=1,\ldots,k$, satisfies $\mathcal{X}(\widetilde{H}_j)=0$ and  $\delta(\widetilde{H}_j)\ne 0$ for some $\delta\in \Delta$. Moreover, according to Theorem \ref{Thm Stronger},  $\widetilde{H}_1,\ldots,\widetilde{H}_k$ are functionally independent.

We will prove Theorem \ref{Thm Stronger} by induction on the transcendental degree tr$[L:K]$ of the elementary field extension $L/K$ replacing the order of the tower in the definition of the field extension, because tr$[L:K]$ is more essential than the order in the proof of Theorem \ref{Thm Stronger}, see the treatments adopted in \cite{M.Rosenlicht,RischRObert,Prelle-Singer}. In fact, if we prove inductively on the order $s$ of the elementary field extension $L/K$, regardless of the value of $s$, it may happen the case ${\rm tr}[K_{s+1}:K_s]=0$, which means that $L$ is algebraic over $K$. By the induction on ${\rm tr}[K_{i+1}:K_i]$, the theorem holds. But if our induction is on the order of the tower, there maybe have several similar discussions on the case  ${\rm tr}[K_{i+1}:K_i]=0$. In this sense, it will be more tedious to consider induction on the order of the tower of the elementary field extension $L/K$ than on  tr$[L:K]$.

We first assume that $L$ is an algebraic extension of $K(t)$, where $t$ is an transcendental element over $K$ satisfying either $ {\delta t}/{t} =\delta v $ for some $v\ne 0$ in $K$ and all $\delta$ in $\Delta$, i.e. $t=e^{v}$, or $\delta t$=$\delta v$/$v$ for some $v \ne 0$ in $K$ and all $\delta$ in $\Delta$, i.e. $t={\rm ln}v$.  Consider the tower of the field extension $K\subset K(t) \subset L$.
We distinguish the number $k$ of the functionally independent first integrals of the vector field $\mathcal X$.

\noindent\textit{Case $1 :k=1$}. The result is \cite[Theorem]{Prelle-Singer}, i.e. Theorem A stated above. So one of the proofs was given there.

\noindent\textit{Case $2 :k>1$}.
The next result, due to Rosenlicht \cite[Theorem 3]{M.Rosenlicht}, will be helpful to our next proofs.
\begin{thm}\label{Thm M.Rosenlicht}
Let $\widetilde{k}$ be a differential field of characteristic zero, and
for each given derivation $D$ of $\widetilde{k}$ let $\alpha_D \in \widetilde{k}$. Then there exists an elementary differential field extension of $\widetilde{k}$ having the same constants field as $\widetilde k$ and containing an element $y$ such that $Dy=\alpha_D$ {for each given derivation $D$}
if and only if there are constants $c_1,\ldots,c_\ell \in \widetilde{k}$ and elements $u_1,\ldots,u_\ell,v\in \widetilde{k}$, such that { for each given derivation $D$} we have
 \[
 {\alpha _D} = Dv + \sum\limits_{i = 1}^\ell {c_i}\frac{{D{u_i}}}{{{u_i}}}.
 \]
\end{thm}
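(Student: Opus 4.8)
The plan is to establish both implications of Theorem \ref{Thm M.Rosenlicht}, with the converse (sufficiency) a short explicit construction and the forward direction (necessity) the genuine content. For sufficiency I would assume $\alpha_D = Dv + \sum_{i=1}^\ell c_i\,Du_i/u_i$ and build the extension by adjoining logarithmic monomials $\lambda_1,\ldots,\lambda_\ell$ with $D\lambda_i=Du_i/u_i$ for every derivation $D$, and set $y=v+\sum_{i=1}^\ell c_i\lambda_i$, so that $Dy=\alpha_D$. The only delicate point is keeping the constants field unchanged; I would first replace $u_1,\ldots,u_\ell$ by a set that is multiplicatively independent modulo constants (recombining the $c_i$ via additivity of the logarithmic derivative), after which each $\lambda_i$ is transcendental over the preceding field and introduces no new constant.

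For necessity I would induct on the length $m$ of a tower $\widetilde k=F_0\subset F_1\subset\cdots\subset F_m$ realizing $y$, mirroring the inductive scheme of Section \ref{Proof of Theorem1}. If $m=0$ then $y\in\widetilde k$ and $v=y$ works with no logarithmic terms. For $m>0$ I would view $\alpha_D\in\widetilde k\subseteq F_1$ and regard $y$ as living in an elementary extension of $F_1$ of tower length $m-1$; the induction hypothesis over $F_1$ then yields constants $c_i$ and elements $u_i,v\in F_1$ with $\alpha_D=Dv+\sum_i c_i\,Du_i/u_i$ for every $D$, and the whole problem reduces to descending this representation from $F_1$ to $\widetilde k$. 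When $F_1/\widetilde k$ is algebraic I would average over conjugates: each derivation extends uniquely to the algebraic extension, so the $\widetilde k$--embeddings $\sigma$ commute with $D$, and summing the identity over all $\sigma$ and dividing by $[F_1:\widetilde k]$ replaces $v$ by $\mathrm{Tr}(v)/[F_1:\widetilde k]\in\widetilde k$ and each $\sum_\sigma D\sigma(u_i)/\sigma(u_i)$ by $D N(u_i)/N(u_i)\in\widetilde k$, where $N$ denotes the norm; this is the trace argument already underlying Lemma \ref{algebraic lemma}.

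The two transcendental cases carry the main obstacle. Here $F_1=\widetilde k(t)$ with $t$ a monomial over a fixed $b\in\widetilde k$, satisfying either $Dt=Db/b$ (the case $t=\ln b$) or $Dt=t\,Db$ (the case $t=e^b$) for every derivation $D$. I would first normalize the $u_i$, using additivity of $u\mapsto Du/u$, so that each is either an element of $\widetilde k$ or a monic irreducible polynomial in $t$, and then expand $v$ in partial fractions in $t$ over $\widetilde k$. Since $\alpha_D\in\widetilde k$ has no $t$--dependence while the derivation formulas for polynomials in $t$, computed exactly as in Cases 1 and 2 of Section \ref{Pre}, control the behaviour of $t$--degrees, I would match partial--fraction skeletons on the two sides: every genuinely $t$--dependent contribution must cancel, each monic irreducible $u_i$ of positive degree is forced out, and $v$ is pinned to an element of $\widetilde k$ plus a term $c\,t$. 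That residual term contributes $c\,Db/b$ in the logarithmic case (an extra logarithmic--derivative summand with $u=b\in\widetilde k$) or $c\,Db=D(cb)$ in the exponential case (absorbable into $v$), so the descended representation lives over $\widetilde k$. The feature distinguishing this from the classical one--derivation Liouville theorem is that a single pair of monomial relations governs every derivation $D$ simultaneously, so the partial--fraction skeleton in $t$ is $D$--independent; I expect the bookkeeping that makes the cancellations consistent across all derivations at once to be the most delicate part, and I would organize it by matching the top $t$--degree terms first and then descending degree by degree.
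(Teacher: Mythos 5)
You should know at the outset that the paper contains no proof of this statement: Theorem \ref{Thm M.Rosenlicht} is quoted as a known result of Rosenlicht (\cite[Theorem 3]{M.Rosenlicht}) and is used as a black box in the proof of Proposition \ref{Prop1}. So the comparison here is with Rosenlicht's own argument, and your outline does follow that standard route: induction on the tower, the trace/norm averaging in the algebraic step (the same Galois-theoretic device the authors themselves use to descend the Jacobian multiplier in Theorem \ref{Theorem2}), and partial-fraction bookkeeping in $t$ for the two transcendental steps, with the correct key observation that the partial-fraction skeleton in $t$ is independent of $D$, which is what lets a single representation serve all derivations at once.

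Two steps need repair. First, your exponential case contains a concrete miscalculation: if $Dt = t\,Db$ and $c$ is a constant, then $D(ct) = ct\,Db$, \emph{not} $c\,Db$, so a residual term $ct$ in $v$ produces a $t$-dependent quantity that nothing else can cancel; the correct conclusion is that $c=0$ and $v$ descends to $\widetilde k$ outright (any Laurent coefficient $a_j$ of $v$ with $j \ne 0$ would satisfy $Da_j + j a_j Db = 0$ for all $D$, making $a_j t^j$ a new constant, contradicting the equality of constants fields), while the extra summands of the form $c\,Db = D(cb)$ arise from powers of $t$ occurring among the $u_i$, via $Dt/t = Db$, and those are what get absorbed into $v$. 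You have conflated the logarithmic mechanism (where $v = ct + w$ genuinely can occur and $D(ct) = c\,Db/b$ is a new logarithmic-derivative term) with the exponential one; note also that in the exponential case $t$ is a ``special'' irreducible with $t \mid Dt$, so $v$ must be allowed to be a Laurent polynomial in $t$, which your sketch omits. Second, in the sufficiency direction, the claim that adjoining the logarithms $\lambda_i$ introduces no new constants after a multiplicative-independence normalization is precisely the point that requires proof in the several-derivation setting: what one must show is that if $\widetilde k(\lambda)$, with $\lambda$ transcendental and $D\lambda = Du/u$ for all $D$, acquires a constant outside $\widetilde k$, then there is a \emph{single} $w \in \widetilde k$ with $Dw = Du/u$ for every $D$ simultaneously (this follows because the coprime numerator and denominator of the new constant are polynomials independent of $D$, so the one-derivation computation yields the same $w$ for each $D$), in which case that term is absorbed into $v$ instead of being adjoined. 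Neither defect is fatal to the strategy, but as written the exponential step is false and the constants-preservation step is asserted rather than proved.
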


Christopher et al \cite[Proposition 3]{CLPW2019} provided an application of Theorem \ref{Thm M.Rosenlicht} to planar polynomial differential systems which have an elementary first integral. Based on Theorem \ref{Thm M.Rosenlicht}, we can prove the next result.

\begin{prop}\label{Prop1}
Assume that  the functionally independent first integrals $H_1,\ldots,H_k$ of $\mathcal X$ satisfy $\delta H_j \in K$, $j=1,\ldots,k$, for all $\delta \in \Delta=\{\partial _{{x_1}}, \cdots ,\partial _{{x_n}}\}$. Then there exist $\widetilde{H}_j$, $j=1,\ldots,k$, being of the form
\[
\widetilde{H}_j=w_{0,j} + \sum\limits_{i = 1}^{{m_j}} {{c_{i,j}}\ln {w_{i,j}}},\ \
 w_{0,j}, w_{1,j},\ldots,w_{m_j,j} \in K, \ \  c_{1,j},\ldots,c_{m_j,j} \in \mathbb C ,
 \]
which are functionally independent first integrals of $\mathcal X$.
\end{prop}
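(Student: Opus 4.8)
The plan is to apply Rosenlicht's theorem (Theorem \ref{Thm M.Rosenlicht}) to each $H_j$ separately. The point is that the hypothesis $\delta H_j \in K$ for all $\delta \in \Delta$ is precisely what makes $H_j$ an integral of a ``rational closed form'' over $K$, so Rosenlicht's criterion produces a representation of the derivatives $\delta H_j$ in terms of rational functions and logarithmic derivatives of rational functions. The reduced integral built from this representation will turn out to differ from $H_j$ only by an additive constant, which is why the first-integral property and functional independence are inherited for free.

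Concretely, I would fix $j\in\{1,\ldots,k\}$ and set $\alpha_\delta:=\delta H_j$ for each $\delta\in\Delta$; by hypothesis $\alpha_\delta\in K$. Since $L$ is an elementary field extension of $K$ with $C(L,\Delta)=C(K,\Delta)$ and contains the element $H_j$ satisfying $\delta H_j=\alpha_\delta$ for every $\delta\in\Delta$, the ``only if'' direction of Theorem \ref{Thm M.Rosenlicht} (applied with $\widetilde k=K$ and the family of derivations $\Delta$) yields constants $c_{1,j},\ldots,c_{m_j,j}\in C(K,\Delta)$ and elements $w_{0,j},w_{1,j},\ldots,w_{m_j,j}\in K$ such that
\[
\delta H_j=\delta w_{0,j}+\sum_{i=1}^{m_j}c_{i,j}\frac{\delta w_{i,j}}{w_{i,j}}\qquad\text{for all }\delta\in\Delta.
\]
Because $K=\mathbb C(x)$ and $\Delta=\{\partial_{x_1},\ldots,\partial_{x_n}\}$, the constants field $C(K,\Delta)$ is exactly $\mathbb C$, so $c_{i,j}\in\mathbb C$ as required.

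I would then define $\widetilde H_j:=w_{0,j}+\sum_{i=1}^{m_j}c_{i,j}\ln w_{i,j}$, which already has the asserted form with all $w_{i,j}\in K$. Using $\delta\ln w_{i,j}=\delta w_{i,j}/w_{i,j}$ together with the displayed identity gives $\delta\widetilde H_j=\delta H_j$ for every $\delta\in\Delta$, i.e. $\nabla_x(\widetilde H_j-H_j)\equiv0$ on the common domain, so $\widetilde H_j-H_j$ is a constant $\kappa_j\in\mathbb C$. From $\widetilde H_j=H_j+\kappa_j$ I get at once $\mathcal X(\widetilde H_j)=\mathcal X(H_j)=0$, since $\mathcal X$ annihilates constants, and $\nabla_x\widetilde H_j=\nabla_x H_j$, so the gradients of $\widetilde H_1,\ldots,\widetilde H_k$ coincide with those of $H_1,\ldots,H_k$ and hence remain linearly independent; thus $\widetilde H_1,\ldots,\widetilde H_k$ are functionally independent first integrals of $\mathcal X$.

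I expect the only delicate point to be the correct invocation of Theorem \ref{Thm M.Rosenlicht} in the multi-derivation setting: one must verify that the single element $H_j\in L$ simultaneously realizes $\delta H_j=\alpha_\delta$ for all $\delta\in\Delta$, so that the theorem returns one common representation $(w_{0,j};w_{i,j};c_{i,j})$ valid for every $\delta$, rather than a separate representation per derivation. Once this simultaneity is secured the reduction is exact up to an additive constant, which is exactly why—in sharp contrast with the general inductive step behind Theorem \ref{Theorem1}—the functional independence of the reduced integrals requires no extra work here.
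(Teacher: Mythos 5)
Your proposal is correct and follows essentially the same route as the paper: both apply Rosenlicht's criterion (Theorem \ref{Thm M.Rosenlicht}) to each $H_j$ using the hypothesis $\delta H_j\in K$ together with $C(L,\Delta)=C(K,\Delta)$, define $\widetilde H_j=w_{0,j}+\sum_i c_{i,j}\ln w_{i,j}$, and conclude from $\delta\widetilde H_j=\delta H_j$ that both the first-integral property and functional independence carry over. Your explicit remarks that $\widetilde H_j-H_j$ is an additive constant and that the representation must be simultaneous across all $\delta\in\Delta$ merely make explicit what the paper leaves implicit.
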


\begin{proof}
Applying Theorem \ref{Thm M.Rosenlicht} to $H_j$, $j \in \{1,\ldots,k\}$, yields that there are $w_{0,j},\ldots,w_{m_j,j} \in K$ and $c_{1,j},\ldots,c_{m_j,j} \in \mathbb C$, $m_1,\ldots,m_k \in \mathbb N$, such that, for all $\delta \in \Delta$,
\[
\delta H_j=\delta{w_{0,j}} + \sum\limits_{i = 1}^{{m_j}} {{c_{i,j}}\frac{\delta{w_{i,j}}}{{{w_{i,j}}}}},\quad j=1,\ldots,k.
\]
Set $\widetilde{H}_j=w_{0j} + \sum\limits_{j = 1}^{{m_j}} {{c_{ij}}\ln {w_{ij}}}$, $j=1,\ldots,k$. Then 
$\delta H_j$=$\delta \widetilde{H}_j$, $j=1,\ldots,k$, which means that 
$\mathcal{X}(\widetilde H_j)=\mathcal{X}(H_j)=0$, and $\widetilde H_1,\ldots, \widetilde H_k$ are functionally independent due to those  of $H_1,\ldots,H_k$, the results that we want to prove.%
\end{proof}

Next we turn to the proof of Theorem \ref{Thm Stronger} in the case $\delta H_j\notin K$ for some $\delta\in \Delta$. The proof will be processed via induction on the number of functionally independent first integrals.
We firstly consider the case $k=2$.

\begin{prop}\label{Prop2}
Consider the tower of the differential field extension $K\subset K(t) \subset L$ with $t$ an transcendental element over $K$ and $L$ an algebraic extension of $K(t)$. Let $H_1$ and $H_2$ be two functionally independent first integrals of $\mathcal{X}$ in $L$.
If $\mathcal{X}(t)=0$, then Theorem \ref{Thm Stronger}
holds.
\end{prop}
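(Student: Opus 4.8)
The plan is to show that, under the stated hypotheses, the vector field $\mathcal X$ in fact possesses two functionally independent \emph{rational} first integrals; these already have the form demanded by Theorem \ref{Thm Stronger}, with no logarithmic terms and with $w_{0,j}\in K\subset\overline K\cap L$. First I would exploit $\mathcal X(t)=0$. Since $t$ is a transcendental tower element, it satisfies either $\delta t/t=\delta v$ (the case $t=e^v$) or $\delta t=\delta v/v$ (the case $t=\ln v$) for some nonconstant $v\in K$ and all $\delta\in\Delta$. Applying $\mathcal X=\sum_i P_i\partial_{x_i}$ to these relations gives $\mathcal X(t)=t\,\mathcal X(v)$ in the first case and $\mathcal X(t)=\mathcal X(v)/v$ in the second, so $\mathcal X(t)=0$ forces $\mathcal X(v)=0$. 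Thus $v\in K$ is a nonconstant rational first integral, and I would set $\widetilde H_1=v$. Note also that, as a function of $x$, $t$ is a function of $v$, so $\nabla t\parallel\nabla v$.

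Next I would locate the second first integral. Because $H_1,H_2$ are functionally independent, $\nabla H_1$ and $\nabla H_2$ cannot both be parallel to $\nabla v$ on an open set; hence at least one of them, say $H_2$, is functionally independent from $v$. The remaining task is to produce a special‑form first integral that is functionally independent from $\widetilde H_1=v$, and the argument is that a rational such first integral must exist.

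The main step is the following dichotomy. Since $\mathcal X$ restricts to a derivation of $K(t)$ with $\mathcal X(t)=0$, a coefficientwise computation exactly as in Case 1 and Case 2 of Section \ref{Pre} yields $C(K(t),\mathcal X)=C(K,\mathcal X)(t)$. Because $K(t)\subset L$ is algebraic and $\mathcal X(H_2)=0$, Lemma \ref{algebraic lemma} shows that the coefficients of the minimal polynomial $m(X)=X^r+a_{r-1}X^{r-1}+\cdots+a_0$ of $H_2$ over $K(t)$ lie in $C(K(t),\mathcal X)=C(K,\mathcal X)(t)$. Now either (a) some rational first integral $v'\in C(K,\mathcal X)$ is functionally independent from $v$, or (b) every element of $C(K,\mathcal X)$ is a function of $v$. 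I would rule out (b): in case (b), since $t$ is also a function of $v$, each $a_i\in C(K,\mathcal X)(t)$ is a function of $v$; differentiating the identity $\Psi(x):=H_2^r+\sum_{i}a_i H_2^i\equiv0$ with respect to each $x_\ell$ produces $A\,\nabla v+m'(H_2)\,\nabla H_2=0$ for a scalar function $A=\sum_i a_i'(v)H_2^i$. As the characteristic is zero the minimal polynomial is separable, so $m'(H_2)\neq0$, whence $\nabla H_2\parallel\nabla v$, contradicting the functional independence of $H_2$ from $v$. (The degenerate possibility $H_2\in K(t)$ is immediate: then $H_2\in C(K,\mathcal X)(t)$ is itself a function of $v$ in case (b), the same contradiction.) Hence case (a) holds, giving a rational first integral $v'$ functionally independent from $v$; I would then set $\widetilde H_2=v'$. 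Both $\widetilde H_1=v,\widetilde H_2=v'$ lie in $K\subset\overline K\cap L$, satisfy $\mathcal X(\widetilde H_j)=0$ and $\delta\widetilde H_j\neq0$ for some $\delta\in\Delta$ (being nonconstant), and are functionally independent, which is precisely the conclusion of Theorem \ref{Thm Stronger} for $k=2$ in this case.

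The hard part will be the passage from the algebraic to the analytic statement inside the dichotomy: namely, establishing $C(K(t),\mathcal X)=C(K,\mathcal X)(t)$ and then using separability ($m'(H_2)\neq0$) to convert the purely algebraic fact ``all coefficients of $m$ are functions of $v$'' into the analytic conclusion $\nabla H_2\parallel\nabla v$. This is exactly the point, emphasized in the introduction, where algebraic dependence over $K(t)$ must be translated into functional (in)dependence, and it is where I expect the most care to be required.
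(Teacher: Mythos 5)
Your proposal is correct (modulo the same algebra-to-analysis glossing that the paper itself allows), but it takes a genuinely different route from the paper's proof. Both arguments rest on the same two pillars: Lemma \ref{algebraic lemma}, which places the minimal-polynomial coefficients of $H_1,H_2$ in $C(K(t),\mathcal X)$, and the trick of differentiating a minimal algebraic identity to translate between algebraic and functional dependence. The difference is in how these are deployed. You begin by observing that $\mathcal X(t)=0$ forces $\mathcal X(v)=0$, so the tower datum $v$ is already a nonconstant rational first integral (the paper uses this fact only implicitly in its final sentence), and you then argue by contradiction: if every element of $C(K,\mathcal X)$ were functionally dependent on $v$, then --- using $C(K(t),\mathcal X)=C(K,\mathcal X)(t)$ and the fact that $t$ itself is a function of $v$ --- all coefficients of the minimal polynomial of $H_2$ would be functions of $v$, and differentiating that identity (with $m'(H_2)\ne 0$ by minimality) would give $\nabla H_2\parallel\nabla v$, contradicting the choice of $H_2$. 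The paper instead proceeds constructively: it extracts a functionally independent pair $a,b$ among the coefficients, factors them in the factorial ring $K[t]$, uses the monic-plus-$\mathcal X(t)=0$ degree argument to conclude that every irreducible factor and the rational parts $h_a,h_b$ are first integrals, and finally reads off a rational first integral $A$ independent of $t$ from the coefficients of a nonconstant factor. (Incidentally, your identity $C(K(t),\mathcal X)=C(K,\mathcal X)(t)$ is proved exactly by that coprime-representation and degree argument, i.e.\ the step \eqref{dengshi1}--\eqref{dengshi2}, rather than by ``Cases 1 and 2 of Section \ref{Pre}'', which treat a derivation acting nontrivially on the transcendental; the claim is true, but the citation is off.) What your version buys: it is shorter, and it makes explicit that in this case $\mathcal X$ always has two functionally independent \emph{rational} first integrals, namely $v$ and $v'$. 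What the paper's version buys: its constructive intermediate steps are precisely what is recorded as Corollary \ref{Cor1} --- stated for an arbitrary transcendental $\eta$ with $\mathcal X(\eta)=0$, not necessarily of the form $e^v$ or $\ln v$ --- and that corollary is reused in Propositions \ref{Prop3} and \ref{Prop5}; your argument leans on the special form of $t$ from the outset, so it does not directly furnish Corollary \ref{Cor1}, although it could be adapted by running your dichotomy against $\eta$ itself instead of against $v$.
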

\begin{proof} By the assumption of the proposition, let
\begin{equation}\label{minimal Eq}
\begin{split}
H_1^\ell + {a_{\ell - 1}}H_1^{\ell - 1} +  \cdots  + {a_1}{H_1} + {a_0}& = 0,\\
H_2^m + {b_{m - 1}}H_2^{m - 1} +  \cdots  + {b_1}{H_2} + {b_0}& = 0
\end{split}
\end{equation}
be the minimal algebraic equations over $K(t)$ that $H_1$ and $H_2$ satisfy, respectively,
where $a_j$, $b_i \in K(t)$, $j=0, 1,\ldots,\ell-1$, $i=0, 1,\ldots,m-1$. By Lemma \ref{algebraic lemma}, we know that the set $S_1:=\{a_0,\ldots,a_{n-1},b_0,\ldots,b_{m-1}\}\subset C(K(t),\mathcal{X})$. We claim that there exist at least two elements in $S_1$, saying $a$ and $b$, which are functionally independent, {i.e. ${\rm d}a \wedge {\rm d}b \ne 0$, where d represents the exterior differentiation. Indeed, by contrary we assume that for any two elements $a$, $b \in S_1$, one has ${\rm d}a \wedge {\rm d}b =0$.  For any $a\in S_1$, by taking exterior differentiation on \eqref{minimal Eq} and doing wedge product with d$a$, it follows that
\begin{equation}\label{minimal Eq2}
\begin{split}
(l{H_1}^{l-1}+{(l-1)}a_{l-1}{H_1}^{l-2}+\cdots+a_1){\rm d}H_1\wedge{\rm d}a=&0,  \\
     (m{H_2}^{m-1}+{(m-1)}b_{m-1}{H_2}^{m-2}+\cdots+b_1){\rm d}H_2\wedge{\rm d}a=&0.
\end{split}
\end{equation}
By assumption, the degree of the minimal algebraic equations of $H_1$ and $H_2$ are $\ell$, $m$ respectively, hence
\begin{equation*}
\begin{split}
l{H_1}^{l-1}+{(l-1)}a_{l-1}{H_1}^{l-2}+\cdots+a_1\ne& 0,\\
m{H_2}^{m-1}+{(m-1)}b_{m-1}{H_2}^{m-2}+\cdots+b_1\ne& 0.
\end{split}
\end{equation*}
Then, we have ${\rm d}H_1\wedge {\rm d}a={\rm d}H_2\wedge {\rm d}a=0$. Since $H_1$ and $H_2$ are functions uniquely determined by the elements in $S_1$, the arbitrary of $a\in S_1$ forces that ${\rm d}H_1\wedge{\rm d}H_2=0$, a contradiction. The claim follows.}

As a consequence, we have proved that the vector field $\mathcal X$ has two functionally independent first integrals in $K(t)$.

Since $t$ is transcendental over $K$, the ring $K[t]$ is factorial, and so we can write $a$ and $b$ in the form
\begin{equation}\label{factorial}
 a = {h_a}\prod\limits_{j = 1}^r {Q_j^{{r_j}}},\quad
 b = {h_b}\prod\limits_{i = 1}^l {R_i^{{l_i}}},
\end{equation}
where $h_a$, $h_b \in K$, $Q_1,\ldots,Q_r\in K[t]$ (resp. $R_1,\ldots,R_l\in K[t]$) are pairwise distinct monic irreducible polynomials, and $r_j$, $l_i \in \mathbb Z \setminus \{0\}$, $j=1,\ldots,r$, $i=1,\ldots, l$.
Set $S_2 :=\{h_a,Q_1,\ldots,Q_r; h_b,R_1,\ldots,R_l\}$. The similar arguments as utilized in the last paragraph can verify that there exist $Q,R\in S_2$ which are functionally independent, { i.e. ${\rm d}Q \wedge {\rm d}R \ne 0$. If not, since d$a$ (resp. d$b$) can be expressed as a linear combination of d$h_a$, d$Q_1$,\ldots, d$Q_r$ (resp. d$h_b$, d$R_1$,\dots, d$R_l$), it} follows that ${\rm d}a\wedge{\rm d}b=0$, a contradiction.

Substituting the expression \eqref{factorial} of $a$ into the equality $\mathcal{X}(a)=0$ gives
\begin{equation}\label{dengshi1}
{h_a}\prod\limits_{j = 1}^r {Q_j^{{r_j}}} \left( {\frac{{\mathcal{X}({h_a})}}{{{h_a}}} + \sum\limits_{j = 1}^r {{r_j}\frac{{\mathcal{X}({Q_j})}}{{{Q_j}}}} } \right) = 0.
\end{equation}
This equality leads to
\begin{equation} \label{dengshi2}
\frac{{\mathcal{X}({h_a})}}{{{h_a}}}\prod\limits_{j = 1}^r {{Q_j}}  + \sum\limits_{j = 1}^r {{r_j}} {Q_1} \cdots \mathcal{X}({Q_j}) \cdots {Q_r} = 0.
\end{equation}
Note that the left hand side of the equality \eqref{dengshi2} is a polynomial in $t$ with coefficients in $K$, i.e. it belongs to $K[t]$, and that $Q_s$ and $Q_r$ for $s\ne r$ are relatively coprime monic irreducible polynomials. By the expression in \eqref{dengshi2} it follows that $Q_j(t)$ divides $\mathcal X(Q_j(t))$, denoted by $Q_j | \mathcal{X}(Q_j)$, $j=1,\ldots,r$. Applying the same arguments to $b$ yields $R_i | \mathcal{X}(R_i)$, $i=1,\ldots,l$.

Set $q_j:={\rm deg}(Q_j(t))$ then $q_j\geq1$.  Since $Q_j(t)$ is monic and $\mathcal{X}(t)=0$, it follows that ${\rm deg}(\mathcal{X}(Q_j(t)))\le q_j-1$. The fact $Q_j | \mathcal{X}(Q_j)$ shows that $\mathcal{X}(Q_j(t))=0$.
Based on this fact, the equality \eqref{dengshi1} shows that $\mathcal{X}(h_a)=0$. Similarly, we have $\mathcal{X}(R_i)=0$ and $\mathcal{X}(h_b)=0$. Consequently, $Q$ and $R$ satisfy that $\mathcal{X}\left( Q \right) = \mathcal{X}\left( R \right) = 0$.

If \{$Q$, $R$\}=\{$h_a$, $h_b$\}, this means that $h_a$ and $h_b$ are functionally independent rational first integrals of the vector field $\mathcal{X}$, we are done.

If $\{Q, R\}\ne \{h_a, h_b\}$, without loss of the generality, we assume that ${\rm deg}Q\geq 1$ and
\begin{equation}\label{dengshi3}
\begin{split}
{\text{Q}} = {t^q} + {A_{q - 1}}{t^{q - 1}} &+  \cdots  + {A_1}t + {A_0} \hfill \\
R = {t^p} + {B_{p - 1}}{t^{p - 1}} &+  \cdots + {B_1}t + {B_0} \hfill \\
\end{split}
\end{equation}
where $A_j$, $B_i \in K$, $j=0, 1,\ldots,q-1$, $i=0, 1,\ldots,p-1$. We mention that if ${\rm deg}R=0$ then $B_0\in \{h_a,h_b\}$. Taking the derivation $\mathcal{X}$ on $Q$ yields
\begin{equation}\label{dengshi4}
0 = \mathcal{X}({A_{q - 1}}){t^{q - 1}} +  \cdots  + \mathcal{X}({A_1})t + \mathcal{X}({A_0}).
\end{equation}
Recall that $t$ is transcendental over $K$ and all coefficients of the right hand side of the equality \eqref{dengshi4} are in $K$. These together with \eqref{dengshi4} force that $\mathcal{X}(A_j)=0$, $j=0, 1,\ldots,q-1$. Similarly, we have
$\mathcal{X}(B_i)=0$, $i=0, 1,\ldots,p-1$. Let $S_3:=\{A_0,\ldots,A_{q-1}; B_0,\ldots,B_{p-1}\}$. From independence of $Q$ and $R$, it follows that there exists some $A\in S_3$ such that $A$ and $t$ are functionally independent {(if not, for any $A \in S_3$, one has ${\rm d}A\wedge{\rm d}t=0$, and so ${\rm d}Q\wedge{\rm d}R=0$, a contradiction).} Consequently, if $t={\rm ln}v$, then $A$ and ${\rm ln}v$ are the desired first integrals; if $t=e^{v}$, then $A$ and $v$ are the desired first integrals. The proposition follows.
\end{proof}

Summarizing the proof of Proposition \ref{Prop2} gives the next result.

\begin{Cor}\label{Cor1}
Let $K\subset K(\eta) \subset L$, with $K=\mathbb C(x_1,\ldots,x_n)$, where $\eta$ is transcendental over $K$ and $L$ is an algebraic extension of $K(\eta)$. If $\eta, H_1, H_2 \in C(L,\mathcal{X})$, and $H_1$ and $H_2$ are functionally independent, then either $\mathcal X$ has two functionally independent rational first integrals, or there exists an $\widetilde H_1\in C(K,\mathcal{X})$ such that $\eta$ and $\widetilde H_1$ are two functionally independent first integrals of $\mathcal X$.
\end{Cor}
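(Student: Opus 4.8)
The plan is to observe that the hypotheses here are exactly those exploited in the proof of Proposition \ref{Prop2}, with $\eta$ playing the role of $t$, and to read the conclusion off from the two cases that arise there. The proof of Proposition \ref{Prop2} used only two facts about $t$: that $t$ is transcendental over $K$, so that $K[t]$ is factorial, and that $\mathcal{X}(t)=0$, which forced $\deg_t \mathcal{X}(Q_j(t)) \le \deg_t Q_j(t) - 1$. Both hold verbatim for $\eta$, since $\eta$ is transcendental over $K$ and $\eta \in C(L,\mathcal{X})$ gives $\mathcal{X}(\eta)=0$. So I would simply rerun that argument with $\eta$ in place of $t$.

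Concretely, I would first let $H_1,H_2$ satisfy their minimal algebraic equations over $K(\eta)$; by Lemma \ref{algebraic lemma} every coefficient lies in $C(K(\eta),\mathcal{X})$, and the exterior-differentiation-and-wedge argument of Proposition \ref{Prop2} produces two functionally independent coefficients $a,b\in K(\eta)$. Factoring $a$ and $b$ in the factorial ring $K[\eta]$ as in \eqref{factorial} and applying the same independence argument to the set $S_2$ of leading coefficients in $K$ together with the monic irreducible factors in $K[\eta]$, I obtain two functionally independent elements $Q,R\in S_2$. The divisibility computation $Q_j \mid \mathcal{X}(Q_j)$ combined with $\mathcal{X}(\eta)=0$ forces $\mathcal{X}(Q_j)=0$ and $\mathcal{X}(h_a)=0$, and likewise for $R$; hence $\mathcal{X}(Q)=\mathcal{X}(R)=0$.

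The conclusion then splits exactly as in Proposition \ref{Prop2}. If $\{Q,R\}=\{h_a,h_b\}\subset K$, then $Q$ and $R$ are two functionally independent rational first integrals, which is the first alternative. Otherwise, assuming without loss of generality $\deg_\eta Q \ge 1$, I expand $Q = \eta^q + A_{q-1}\eta^{q-1}+\cdots+A_0$ (and $R$ similarly) with coefficients in $K$; applying $\mathcal{X}$ and using transcendence of $\eta$ together with $\mathcal{X}(\eta)=0$ gives $\mathcal{X}(A_j)=0$ and $\mathcal{X}(B_i)=0$, and functional independence of $Q$ and $R$ yields a coefficient $A\in K$ functionally independent from $\eta$. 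Setting $\widetilde{H}_1:=A\in C(K,\mathcal{X})$, the pair $\eta,\widetilde{H}_1$ is the desired pair, the second alternative. The sole genuine difference from Proposition \ref{Prop2} is the final step: there $t$ was $\ln v$ or $e^v$, so one could replace $\eta$ by the rational function $v$, whereas here $\eta$ is an arbitrary transcendental first integral and that replacement is unavailable, so we retain $\eta$ alongside the rational first integral $\widetilde{H}_1$. The only points needing care are the repeated claims that a finite set contains two functionally independent elements; each is handled by the wedge-product argument already recorded in Proposition \ref{Prop2}, so no new idea is required, and I expect the mildest obstacle to be merely checking that independence of $Q,R$ (rather than of $a,b$) propagates to independence of some coefficient from $\eta$, which is immediate since $Q,R$ are determined by their coefficients together with $\eta$.
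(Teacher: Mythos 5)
Your proposal is correct and follows essentially the same route as the paper, whose entire proof of this corollary is the observation that the argument of Proposition \ref{Prop2} applies verbatim with $\eta$ in place of $t$ (the paper states it as ``summarizing the proof of Proposition \ref{Prop2}''). Your additional remark --- that the final substitution of $v$ for $t$ is unavailable for a general transcendental $\eta$, so one keeps $\eta$ itself alongside the rational first integral $\widetilde H_1$ --- is exactly the point that makes the corollary's second alternative read as it does.
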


Up to now, we have proved Theorem \ref{Thm Stronger} for $n=2$ and $K\subset K(t)\subset L$ satisfying $\mathcal X(t)=0$. Next we study the case $\mathcal X(t)\ne 0$.

\begin{prop}\label{Prop3}
Consider the tower of the elementary field extension $K\subset K(t) \subset L$ with $t$ transcendental over $K$ and $L$ algebraic extension of $K(t)$. Assume that $H_1$ and $H_2$ are two functionally independent first integrals of $\mathcal{X}$ in $L$. If $\mathcal{X}(t)\ne0$, then Theorem \ref{Thm Stronger}
holds.
\end{prop}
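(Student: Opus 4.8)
The plan is to follow the opening strategy of Proposition \ref{Prop2} and then exploit the new feature $\mathcal{X}(t)\neq 0$. First I would reduce from $L$ to $K(t)$: applying Lemma \ref{algebraic lemma} to the minimal polynomials of $H_1,H_2$ over $K(t)$ shows that all their coefficients lie in $C(K(t),\mathcal{X})$, and the exterior-derivative argument used to prove the Claim in Proposition \ref{Prop2} produces two functionally independent first integrals $a,b\in K(t)$. The problem thus becomes: given functionally independent first integrals $a,b\in K(t)$ with $t$ transcendental over $K$ and $\mathcal{X}(t)\neq 0$, construct two functionally independent first integrals of the reduced form. The two shapes of $t$, namely $t=e^{v}$ and $t=\ln v$ with $v\in K$, will be handled separately, since $\mathcal{X}(t)=\mathcal{X}(v)\,t$ in the first case while $\mathcal{X}(t)=\mathcal{X}(v)/v\in K$ in the second.

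For the exponential case $t=e^{v}$ I set $\nu:=\mathcal{X}(v)\neq 0$ and factor $a=h_a\prod_j Q_j^{r_j}$ in the factorial ring $K[t]$ exactly as in \eqref{factorial}; the coprimality argument yielding $Q_j\mid\mathcal{X}(Q_j)$ carries over verbatim. The difference lies in computing $\mathcal{X}(Q_j)$: writing $Q_j=t^{q}+A_{q-1}t^{q-1}+\cdots+A_0$ and using $\mathcal{X}(t^{i})=i\nu t^{i}$ gives $\mathcal{X}(Q_j)=\sum_i(\mathcal{X}(A_i)+i\nu A_i)t^{i}$, of degree $q$ with leading coefficient $q\nu$; together with $Q_j\mid\mathcal{X}(Q_j)$ this forces $\mathcal{X}(Q_j)=q\nu Q_j$, hence $\mathcal{X}(A_i)=(q-i)\nu A_i$. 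Each nonzero $A_i$ then yields a reduced first integral $\psi_{j,i}:=\ln A_i-(q-i)v$, and the cofactor balance forced by $\mathcal{X}(a)=0$ gives $\mathcal{X}(h_a)/h_a=-\big(\sum_j r_jq_j\big)\nu$, i.e. the reduced first integral $\phi_a:=\ln h_a+\big(\sum_j r_jq_j\big)v$, with $q_j=\deg Q_j$. The same construction is applied to $b$.

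For the logarithmic case $t=\ln v$ I set $\mu:=\mathcal{X}(v)/v\neq 0$. Now $\deg_t\mathcal{X}(Q_j)\le q-1<q$, so $Q_j\mid\mathcal{X}(Q_j)$ forces $\mathcal{X}(Q_j)=0$ and hence $\mathcal{X}(h_a)=\mathcal{X}(h_b)=0$, making $h_a,h_b$ rational first integrals. I would then prove the structural fact that whenever $C(K(t),\mathcal{X})\neq C(K,\mathcal{X})$ there exists a single reduced first integral $s=\ln v+g$ with $g\in K$ and $C(K(t),\mathcal{X})=C(K,\mathcal{X})(s)$: taking any non-rational first integral and dividing by its leading coefficient $A_d\in C(K,\mathcal{X})$ to make it monic, the subleading relation $\mathcal{X}(A_{d-1}/A_d)=-d\mu$ gives $\mathcal{X}\big(A_{d-1}/(dA_d)+t\big)=0$; since then $K(t)=K(s)$ with $\mathcal{X}(s)=0$, every first integral is a rational function of $s$ over $C(K,\mathcal{X})$. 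In particular $a,b$ are rational functions of the reduced first integral $s$ whose coefficients are rational first integrals.

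The final and hardest step is to upgrade these reduced first integrals to two functionally independent ones. In each case the constructed reduced first integrals have the property that $\mathrm{d}a$ and $\mathrm{d}b$ lie in their $K(t)$-linear span. In the exponential case this is a short computation: from $\mathrm{d}t=t\,\mathrm{d}v$ one gets $\mathrm{d}Q_j=\sum_i t^{i}A_i\,\mathrm{d}\psi_{j,i}+q_jQ_j\,\mathrm{d}v$, and upon forming $\mathrm{d}\ln a=\mathrm{d}h_a/h_a+\sum_j r_j\,\mathrm{d}Q_j/Q_j$ the $\mathrm{d}v$ contributions cancel precisely because $\sum_j r_jq_j=\deg_t a$, leaving $\mathrm{d}a$ in the span of $\mathrm{d}\phi_a$ and the $\mathrm{d}\psi_{j,i}$; in the logarithmic case it is immediate from $a\in C(K,\mathcal{X})(s)$. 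Since $a,b$ are functionally independent, $\mathrm{d}a\wedge\mathrm{d}b\neq 0$, so the family of reduced first integrals has functional rank at least two, and two of them, say $\widetilde H_1,\widetilde H_2$, are functionally independent; being nonconstant they satisfy $\delta\widetilde H_j\neq 0$ for some $\delta\in\Delta$. I expect the main obstacle to be exactly this rank bookkeeping — the $\mathrm{d}v$-cancellation and, more conceptually, the need to argue analytically through exterior derivatives rather than by equating coefficients of powers of $t$, since $t$ is only algebraically, not functionally, independent from $K$.
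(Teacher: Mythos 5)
Your proposal is correct, and it coincides with the paper's proof only up to a point: the reduction from $L$ to two functionally independent first integrals $a,b\in K(t)$, the factorization in the factorial ring $K[t]$ with the coprimality argument giving $Q_j\mid\mathcal{X}(Q_j)$, the case split $t=\ln v$ versus $t=e^v$, and the key coefficient identities (namely $\mathcal{X}(A_{q-1})+q\,\mathcal{X}(v)/v=0$ in the logarithmic case and $\mathcal{X}(A_i)=(q-i)\mathcal{X}(v)A_i$, $\mathcal{X}(h_a)/h_a=-(\sum_j r_jq_j)\mathcal{X}(v)$ in the exponential case) are exactly the paper's. Where you genuinely diverge is the closing step. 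The paper uses each identity to manufacture a \emph{single} new element $\widetilde t$ of reduced form that is simultaneously a first integral and transcendental over $K$ (e.g.\ $\widetilde t=A_{q-1}+qt$, resp.\ $\widetilde t=h_ae^{\ell v}$ or $A_0^{-1}e^{q_1v}$), verifies via Proposition \ref{extension gonghsi} and Lemma \ref{tr gongshi} that $L$ is algebraic over $K(\widetilde t)$, and then re-invokes the $\mathcal{X}(\eta)=0$ machinery of Proposition \ref{Prop2} in the form of Corollary \ref{Cor1} to produce the second (rational) first integral. You never return to $L$: you instead extract a whole family of reduced first integrals ($\phi_a,\phi_b,\psi_{j,i}$ in the exponential case; $s$ together with the $C(K,\mathcal{X})$-coefficients of $a,b$ in the logarithmic case, via your structure fact $C(K(t),\mathcal{X})=C(K,\mathcal{X})(s)$), show that $\mathrm{d}a$ and $\mathrm{d}b$ lie pointwise in the span of their differentials, and finish with the wedge-rank argument. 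Both routes land on the same dichotomy: either two rational first integrals, or one rational one paired with one of the form $w_0+c\ln w_1$. Yours buys self-containedness --- the whole argument stays inside $K(t)$ and Corollary \ref{Cor1} is never needed --- at the price of extra bookkeeping (the $\mathrm{d}v$-cancellation, the structure fact, and the pointwise-rank lemma that pairwise vanishing wedges of finitely many forms force $\mathrm{d}a\wedge\mathrm{d}b\equiv 0$), all of which is sound and of the same style as the ``maximal functionally independent group'' reasoning the paper itself uses in Proposition \ref{Prop4}. The paper's route buys economy and reuse, since Corollary \ref{Cor1} is already in hand and its analogue (Corollary \ref{Cor2}) is deployed again in Proposition \ref{Prop5}. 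If you write this up, make two small steps explicit: the structure fact requires the coprime numerator/denominator argument so as to cover rational, not merely polynomial, first integrals in $K(t)$; and the rank argument should state that pairwise functional dependence within a finite family forces pointwise rank at most one almost everywhere, which is what transfers the contradiction to $\mathrm{d}a\wedge\mathrm{d}b$.
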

\begin{proof}
We adopt the same notations as in the proof of Proposition \ref{Prop2}.

\noindent{\it Case} 1. $t={\rm ln}v$. Since $Q_j$, $j=1,\ldots,r$, are monic and $\mathcal{X}(t)=\mathcal{X}(v)/v$, we have for each $Q\in \{Q_1,\ldots,Q_r\}$ via \eqref{dengshi3}
\begin{equation}\label{dengshi3-1}
\begin{split}
\mathcal X(Q) =& \left(\mathcal X(A_{q-1})+q\frac{\mathcal X(v)}{v}\right)t^{q-1}+
\left(\mathcal X(A_{q-2})+(q-1)A_{q-1}\frac{\mathcal X(v)}{v}\right)t^{q-1}\\
&+
\left(\mathcal X(A_{1})+2A_2\frac{\mathcal X(v)}{v}\right)t +\mathcal X(A_0)+A_1\frac{\mathcal X(v)}{v}.
\end{split}
\end{equation}
Now $v\in K$ implies ${\rm deg}\mathcal{X}(Q_j(t))\le q_j-1<q_j={\rm deg}(Q_j(t))$. By $Q_j|\mathcal X(Q_j)$ shows that $\mathcal{X}(Q_j)=0$, {which leads to $\mathcal X(h_a) =0$ via \eqref{dengshi1}}. {Similarly, one has $\mathcal{X}(R_i)=0$, $i=1,\ldots, l$, and $\mathcal X(h_b)=0$.}

If $\{Q,R\}=\{h_a,h_b\}$, we are done. If $\{Q,R\}\ne \{h_a,h_b\}$, we assume without loss of generality that $Q$ is a nonconstant polynomial in $t$. Then  by \eqref{dengshi3-1} gives
\begin{equation}\label{dengshi5}
0 = \mathcal X(Q) = \left( {\mathcal X({A_{q - 1}}) + q\frac{{\mathcal X(v)}}{v}} \right){t^{q - 1}} +  \cdots  + \mathcal X({A_0}) + {A_1}\frac{{\mathcal X(v)}}{v}.
\end{equation}
Since $t$ is transcendental over $K$ and all coefficients of the right hand side of the equality \eqref{dengshi5} are in $K$, these imply that ${\mathcal X({A_{q - 1}}) + q\frac{{X(v)}}{v}}=0$. Furthermore, by the fact that $0\ne \mathcal{X}(t)=\mathcal{X}(v)/v$, we have $\mathcal{X}(A_{q-1})\ne 0$ (otherwise, $\mathcal{X}(v)=0$). Therefore, $A_{q-1} \in K\setminus \mathbb C$, i.e. it is a nonconstant rational function. Now, we take $\widetilde t=A_{q-1}+q{\rm ln}v=A_{q-1}+qt$. Notice that, by Proposition \ref{extension gonghsi}, $K(\widetilde t)=K(t)$, and $\widetilde t$ is transcendental over $K$.  Then $K(\widetilde t)(=K(t))\subset L$ is an algebraic extension by the assumption. Since $\mathcal X(\widetilde t)=0$, it follows from Corollary \ref{Cor1} that there exists an $\widetilde A \in K$ such that $\widetilde A$ is functionally independent of $\widetilde t$ and is a first integral of $\mathcal X$. In summary, $\widetilde A$ and $\widetilde t=A_{q-1}+q{\rm ln}v$ are the desired functionally independent first integrals of $\mathcal X$.

{
\noindent {\it Case }2. $t=e^{v}$. Since $Q_j$ is monic and $\mathcal{X}(t)/t=\mathcal{X}(v)$, by $\mathcal X(t)\ne 0$ gives ${\rm deg}Q_j={\rm deg}\mathcal{X}(Q_j)$.
Therefore, $Q_j|\mathcal{X}(Q_j)$, $j=1,\ldots,r$, implies  $\mathcal{X}(Q_j)= \mathcal{X}(v) q_j Q_j$.

\noindent{\it Subcase} 2.1. $\{ a, b\}=\{h_a,h_b\}$. Recall that $a$, $b$ are functionally independent first integrals of system \eqref{vector field}, and so $h_a$ and $h_b$ are functionally independent rational first integrals. The proof is completed.

\noindent{\it Subcase} 2.2.  $\{ a, b\}\ne \{h_a,h_b\}$. We assume without loss of generality that $a \ne h_a$. Recall that all the $Q_j$s are irreducible polynomials in $K[t]$.

If, for all $j=1,\ldots,r$, $t$ divides $Q_j$, then, $a=h_a t^{\ell}$ for some non-zero integer $\ell$. Note that, since $\mathcal{X}(t)\ne 0$, $h_a \notin C(K,\mathcal{X})$. Therefore, we take $\widetilde t={h_a}e^{\ell v}$. Due to $\widetilde t$ is transcendental over $K$ and, from Proposition \ref{extension gonghsi} and Lemma \ref{tr gongshi}, $K(\widetilde t)(\subseteq K(t))\subset L$ is an algebraic extension by the assumption. Since $\mathcal{X}(\widetilde t)=0$, it follows from Corollary \ref{Cor1} that there exists either an $\widetilde A \in K$ such that $\widetilde A$ is functionally independent with $\widetilde t$ (resp. ${\rm ln}\widetilde t=\ell v+{\rm ln}h_a$) and is a first integral of $\mathcal{X}$, or  two functionally independent first integrals of $\mathcal{X}$. Consequently, the proof is completed.

Now we consider the case that $t$ cannot divide all $Q_j$'s. We assume without loss that $t$ is not a factor of $Q_1$.  Set $\deg Q_1=q_1\geq 1$, and
\begin{equation}\label{dengshi e1}
 Q_1=t^{q_1}+A_{q_1-1}t^{q_1-1}+\cdots+A_1t+A_0,
\end{equation}
where $A_0,\ldots,A_{q_1-1}\in K$, with $A_0\ne 0$ by the assumption. Taking the derivation $\mathcal{X}$ acting on $Q_1$ yields
\begin{equation}\label{dengshi e2}
    q_1\mathcal{X}(v)Q_1=\mathcal{X}(Q_1)={q_1}\mathcal{X}(v)t^{q_1}+(\mathcal{X}(A_{q_1-1})+(q_1-1)A_{q_1-1}\mathcal{X}(v))t^{q_1-1}+\cdots+\mathcal{X}(A_0).
\end{equation}
By the transcendence of $t$, this equality forces that
\begin{equation}\label{dengshi e3}
q_1\mathcal{X}(v)A_j=\mathcal{X}(A_j)+jA_j\mathcal{X}(v),\quad j=0,\ldots,q_1-1.
\end{equation}
Particularly, we have $q_1\mathcal{X}(v)A_0=\mathcal{X}(A_0)$. Furthermore, $\mathcal{X}(A_0)\ne 0$ (if not, $\mathcal{X}(v)=0$, a contradiction), and so $A_0$ is a rational function. Associated with \eqref{dengshi e3}, we know that $\mathcal{X}(\widetilde t)=0$, with $\widetilde t={A_0}^{-1}e^{q_1v}$. Note that $\widetilde t$ is transcendental over $K$ and, by Proposition \ref{extension gonghsi} and Lemma \ref{tr gongshi}, $K(\widetilde t) (\subseteq K(t))\subset L $ is an algebraic extension. By Corollary \ref{Cor1}, similar discussion as that in Subcase 2.1 shows that either there exists some rational first integral $\widetilde A$ for $\mathcal{X}$, which is functionally independent of $\widetilde t$, or $\mathcal{X}$ has two functionally independent rational first integrals.

It completes the proof of the proposition.
}
\end{proof}

{We remark that if $k=1$, by our methods, we can prove Theorem $A$ as well.}
Combining with Propositions \ref{Prop1}, \ref{Prop2} and \ref{Prop3}, we have completed the proof of Theorem \ref{Thm Stronger} for the elementary field extension $K\subset K(t)\subset L$ and the existence of two functionally independent first integrals in $L$ of $\mathcal X$.
	
Next we prove Theorem \ref{Thm Stronger} also for the elementary field extension $K\subset K(t)\subset L$ but with $k>2$ functionally independent first integrals in $L$.

Let $\Theta := \left\{ {{f_1}\left( x \right), \cdots ,{f_s}\left( x \right)} \right\}$ be a set of  smooth functions defined on ${\mathbb C}^n$ except perhaps a zero Lebesgue measure subset. We call $ \{{f_{{k_1}}}, \cdots ,{f_{{k_\ell }}}\}\subset \Theta$ a \textit{maximal functionally independent group} of $\Theta$, {if $f_{k_1}, \cdots ,f_{k_\ell }$ are functionally independent on ${\mathbb C}^n$, i.e. ${\rm d}f_{k_1}\wedge\cdots\wedge {\rm d}f_{k_\ell} \ne 0$ while any other $f \in \Theta$ satisfies ${\rm d}f\wedge{\rm d}f_{k_1}\wedge\cdots\wedge {\rm d}f_{k_\ell} = 0$.}

\begin{prop}\label{Prop4}
Consider the tower of the differential field extension $K\subset K(t) \subset L$, where $t$ is transcendental over $K$ and $L/K(t)$ is algebraic extension. Let $H_1$, $H_2$, $\ldots$, $H_k$ $($$k\geq2$$)$ be the functionally independent first integrals of $\mathcal{X}$ in $L$.
If $\mathcal{X}(t)=0$, then Theorem \ref{Thm Stronger} holds.
\end{prop}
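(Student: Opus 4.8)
The plan is to run the scheme of the proof of Proposition \ref{Prop2} for general $k$, replacing the ad hoc ``two functionally independent elements'' arguments there by a uniform bookkeeping of functional rank through differentials. The fact I will use repeatedly is the following: for a finite set $S$ of smooth functions, the maximal number of functionally independent members of $S$ equals, at a generic point, the dimension of $\mathrm{span}_{\mathbb C}\{{\rm d}s:s\in S\}$; consequently, if every member of a functionally independent family of $k$ functions lies generically in $\mathrm{span}_{\mathbb C}\{{\rm d}s:s\in S\}$, then $S$ contains $k$ functionally independent elements. This is precisely the $k$-fold analogue of the wedge-product contradiction carried out for $k=2$ in Proposition \ref{Prop2}.

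First I would pass from $L$ down to $K(t)$. Writing the minimal polynomial $P_j$ of each $H_j$ over $K(t)$, Lemma \ref{algebraic lemma} shows that all its coefficients lie in $C(K(t),\mathcal X)$, i.e. are first integrals of $\mathcal X$ in $K(t)$; collect them into a set $S_1$. Taking exterior differentials of $P_j(H_j)=0$ and using $P_j'(H_j)\ne 0$ (separability in characteristic zero), I get that ${\rm d}H_j$ is, at generic points, a $\mathbb C$-linear combination of the differentials of the coefficients of $P_j$; hence ${\rm d}H_1,\ldots,{\rm d}H_k$ all lie in $\mathrm{span}_{\mathbb C}\{{\rm d}s:s\in S_1\}$, so $S_1$ contains $k$ functionally independent first integrals $g_1,\ldots,g_k\in K(t)$. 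Next I would factor each $g_j$ in the factorial ring $K[t]$ as $g_j=h_j\prod_i Q_{j,i}^{r_{j,i}}$ with $h_j\in K$ and the $Q_{j,i}$ monic irreducible. Exactly as in \eqref{dengshi1}--\eqref{dengshi2}, and using $\mathcal X(t)=0$ together with ${\rm deg}\,\mathcal X(Q)<{\rm deg}\,Q$ for monic $Q$, each factor satisfies $\mathcal X(Q_{j,i})=0$ and each $\mathcal X(h_j)=0$, so every member of $S_2:=\{h_j,Q_{j,i}\}$ is a first integral. Since ${\rm d}g_j=g_j\,{\rm d}\ln g_j$ is a $\mathbb C$-combination of ${\rm d}h_j$ and the ${\rm d}Q_{j,i}$, the functional rank of $S_2$ is again $\ge k$, and I may select $k$ functionally independent first integrals from $S_2$, each either a rational function in $K$ or a nonconstant monic polynomial in $t$.

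Finally I would deal with the polynomial ones. For each such $Q=t^q+A_{q-1}t^{q-1}+\cdots+A_0$, applying $\mathcal X$ and using $\mathcal X(t)=0$ together with the transcendence of $t$ forces $\mathcal X(A_m)=0$ for all $m$, so every coefficient is a rational first integral and ${\rm d}Q$ lies in $\mathrm{span}_{\mathbb C}(\{{\rm d}t\}\cup\{{\rm d}A_m\})$. Hence the set $S_3$ consisting of $t$ and of all the rational first integrals obtained so far still has functional rank $\ge k$. Choosing $k$ functionally independent elements of $S_3$, all are rational functions in $K$ except possibly $t$ itself, and these already have the required shape: a rational $g\in K$ is $\widetilde H=g$ with no logarithmic terms and all $w$'s in $K\subset\overline K\cap L$. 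If $t$ occurs in the chosen family and $t={\rm ln}\,v$, then $\widetilde H=\ln v$ is already admissible with $v\in K$; if $t=e^{v}$, then $\mathcal X(t)=0$ gives $\mathcal X(v)=0$ and ${\rm d}t=t\,{\rm d}v$, so $v\in K$ is a rational first integral functionally equivalent to $t$, and replacing $t$ by $v$ preserves functional independence (no collision can occur, since $v$, being functionally equivalent to $t$, is functionally independent of the remaining $k-1$ members). In either case I obtain $k$ functionally independent first integrals of the form required by Theorem \ref{Thm Stronger}, and $\delta\widetilde H_j\ne0$ for some $\delta\in\Delta$ is automatic since each is nonconstant.

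I expect the main obstacle to be the ``functional rank does not drop'' step: one must verify carefully, and at generic points rather than over a field, that ${\rm d}H_j$ (resp. ${\rm d}g_j$, resp. ${\rm d}Q$) is a genuine $\mathbb C$-linear combination of the differentials of the smaller set, so that the successive passages to minimal-polynomial coefficients, to irreducible factors, and to polynomial coefficients each preserve the existence of $k$ functionally independent members. The algebraic ingredients (Lemma \ref{algebraic lemma}, factoriality of $K[t]$, the degree estimate for $\mathcal X(Q)$) are already available; the genuinely analytic content is this propagation of functional independence.
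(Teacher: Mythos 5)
Your proposal is correct and follows essentially the same route as the paper's proof: passing from the $H_j$ to the coefficients of their minimal polynomials over $K(t)$ (via Lemma \ref{algebraic lemma}), then to the irreducible factors and the rational cofactor in the factorial ring $K[t]$, then to the $K$-coefficients of those factors, using $\mathcal X(t)=0$ and the degree estimate at each stage, and finally keeping $t=\ln v$ or replacing $t=e^v$ by $v$. Your ``functional rank does not drop'' bookkeeping via pointwise spans of differentials is exactly the paper's wedge-product argument (the sets $\Theta_1,\Theta_2,\Theta_3$ and their maximal functionally independent groups), just phrased in linear-algebra rather than exterior-algebra language.
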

\begin{proof}
Let
\begin{equation}\label{dengshi k1}
H_j^{{n_j}} + {a_{j,{n_j} - 1}}H_j^{{n_j} - 1} +  \cdots  + {a_{j,1}}{H_j} + {a_{j,0}} = 0, \quad j\in\{1,\ldots,k\},
\end{equation}
with ${a_{j,0}}, \ldots ,{a_{j,{n_j} - 1}}\in K(t)$, be the minimal algebraic  equation of $H_j$ with $j\in\{1,\ldots,k\}$. According to the proof of Lemma \ref{algebraic lemma}, one gets that
\[
\Theta_1 :=\left\{ {{a_{1,0}}, \ldots ,{a_{1,{n_1} - 1}}, \ldots ,{a_{k,0}}, \ldots ,{a_{k,{n_k} - 1}}} \right\}
\]
is contained in $C(K(t),\mathcal{X})$. Denote by $a_1,\ldots,a_s$ the maximal functionally independent group of $\Theta_1$. {We claim that $k \leq s$. Indeed, the equations in \eqref{dengshi k1} show that
\begin{equation}\label{dengshi k1 df}
{\rm d}H_j=-\frac{1}{\gamma_j}(H_j^{{n_j} - 1}{\rm d}{a_{j,{n_j} - 1}} +  \cdots  + {H_j}{\rm d}{a_{j,1}} + {\rm d}{a_{j,0}}),
\end{equation}
where $0 \ne \gamma_j=n_jH_j^{{n_j-1}}+(n_j-1){a_{j,{n_j} - 1}}H_j^{{n_j} - 2}+\cdots+a_{j,1}$, $j=1,\ldots,k$. By contrary, if $s<k$, it follows that any $k$ elements $a_{n_1},\ldots,a_{n_k}$ of $\Theta_1$ satisfy ${\rm d}a_{n_1}\wedge\cdots\wedge{\rm d}a_{n_k}=0$. However, the equality \eqref{dengshi k1 df} implies that ${\rm d}H_1\wedge\cdots\wedge{\rm d}H_k$ can be written as linear combinations of the $k-$forms ${\rm d}\widetilde {a}_1 \wedge \cdots \wedge {\rm d}\widetilde {a}_k$, where $\widetilde{a}_1,\ldots,\widetilde{a}_k \in \Theta_1$, and so ${\rm d}H_1\wedge\cdots\wedge{\rm d}H_k=0$, a contradiction. The claim follows.}
Particular, $s=n-1$ whenever $k=n-1$, otherwise, if $s=n$, the vector field $\mathcal X$ has $n$ functionally independent first integral, it results in the vector field $\mathcal X$ being null. Hereafter, for convenience, we take $s=k$ so that ${\rm d}a_{1}\wedge\cdots\wedge{\rm d}a_{k}\ne 0$.



Consider $a_j$, $j=1,\ldots,k$, and factorize them as
\begin{equation}\label{dengshi K2}
{a_j} = {h_{{a_j}}}\prod\limits_{i = 1}^{{m_j}} {Q_{j,i}^{{r_{j,i}}}},
\end{equation}
where $h_{a_1},\ldots, h_{a_k}\in K$, and for each given $j\in\{1,\ldots,k\}$, $Q_{j,1},\ldots,Q_{j,m_j}$ are pairwise distinct irreducible polynomials in $K[t]$ and $r_{j,1},\ldots,r_{j,m_j}\in \mathbb Z \setminus \{0\}$.
Substituting the expressions in \eqref{dengshi K2} into $\mathcal{X}(a_j)=0$, $j=1,\ldots,k$, yields
\begin{equation}\label{dengshi K3}
\prod\limits_{i = 1}^{{m_j}} {{Q_{j,i}}} \left(\frac{\mathcal{X}({h_{{a_j}}})}{h_{a_j}} + \sum\limits_{i = 1}^{{m_j}} {{r_{j,i}}\frac{{\mathcal{X}({Q_{j,i}})}}{{{Q_{j,i}}}}}\right)  = 0,\quad j=1,\ldots,k.
\end{equation}

Note that the left hand sides of these equations 
are polynomials in $K[t]$. This forces that $Q_{j,i}|\mathcal{X}(Q_{j,i})$.
Set
\[
\Theta_2:=\left\{ {h_{a_j},Q_{j,i}\left| {j = 1, \ldots ,k, \ i = 1, \ldots ,m_j} \right.} \right\}.
\]
Denote by $b_1,\ldots,b_{\widetilde s}$  \,  the maximal functionally independent group of $\Theta_2$. Then, by the functional independence of $a_1,\ldots,a_k$, it follows that $\widetilde s \geq k$. Without loss of generality and for simplifying notations, we set $\widetilde s =k$.

Since $\mathcal{X}(t)=0$ and $Q_{j,i}\in K[t]$ is monic, using the similar arguments as in the proof of Proposition \ref{Prop2} together with \eqref{dengshi K3}, we get that $\Theta_2 \subset C(K(t),\mathcal{X})$, and so $b_1,\ldots,b_{k}$ are functionally independent first integrals of the vector field $\mathcal X$ defined in \eqref{vector field}.
If $\{b_1,\ldots,b_k\}=\{h_{a_1},\ldots,h_{a_k}\}$, we are done. If $\{b_1,\ldots,b_k\}\ne\{h_{a_1},\ldots,h_{a_k}\}$, without loss of generality we set ${\rm deg}b_1 \geq 1$, and
\begin{equation}\label{dengshi K5}
{b_j} = {t^{{\alpha _j}}} + {B_{j,{\alpha _j} - 1}}{t^{{\alpha _j} - 1}} +  \ldots  + {B_{j,1}}t + {B_{j,0}},\quad j=1,\ldots,k.
\end{equation}
Taking the derivation $\mathcal{X}$ acting on the both sides of equation \eqref{dengshi K5} gives
\[
\{B_{1,0},\ldots,B_{1,\alpha_1-1},\ldots,B_{k,0},\ldots,B_{k,\alpha_k-1}\}\subset C(K,\mathcal{X}),
\]
where we have used the facts that $\mathcal X(b_j)=0$, and $t$ is transcendental over $K$.
Define the set $\Theta_3:=\{t,B_{1,0},\ldots,B_{1,\alpha_1-1},\ldots,B_{k,0},\ldots,B_{k,\alpha_k-1}\}$. Let $\{t, E_1,\ldots,E_{s'-1}\}$ be the maximal functionally independent group of the set $\Theta_3$. Then  $b_1,\ldots, b_k$ can be locally expressed as functions in terms of $t, E_1,\ldots,E_{s'-1}$ via \eqref{dengshi K5}. It implies $s'\geq k$, and so $t,E_1,\ldots,E_{k-1}$ are $k$ functionally independent first integrals of $\mathcal X$ that we desire (if $t=e^v$, we take $v$ replacing $t$).
The proof is completed.
\end{proof}

Summarizing the proof of Proposition \ref{Prop4} arrives the next conclusion.

\begin{Cor}\label{Cor2}
Let $K\subset K(\eta) \subset L$, with $K=\mathbb C(x_1,\ldots,x_n)$, $\eta$ transcendental over $K$, and $L$ an algebraic extension of $K(\eta)$. Assume that $\eta$ is a first integral of the vector field $\mathcal X$. 
If $H_1,\ldots,H_k \in C(L,\mathcal{X})$ $($$k\geq 2$$)$ are functionally independent, then either $\mathcal X$ has $k$ functionally independent rational first integrals, or there exist $\widetilde H_1,\ldots \widetilde H_{k-1}$ in $C(K,\mathcal{X})$ such that $\eta,\widetilde H_1,\ldots \widetilde H_{k-1}$ are functionally independent.
\end{Cor}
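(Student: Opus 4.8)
The plan is to run the argument of Proposition \ref{Prop4} with $t$ replaced by $\eta$, since the hypothesis $\mathcal{X}(\eta)=0$ plays exactly the role that $\mathcal{X}(t)=0$ does there. First I would express each first integral $H_j$ by its minimal polynomial over $K(\eta)$ and invoke Lemma \ref{algebraic lemma} to conclude that all of its coefficients lie in $C(K(\eta),\mathcal{X})$; that is, every such coefficient is itself a first integral of $\mathcal{X}$ lying in $K(\eta)$. Using the exterior-differentiation argument from the claim inside the proof of Proposition \ref{Prop4}, I would then extract from the full list of coefficients a maximal functionally independent group and show it has at least $k$ members: writing each ${\rm d}H_j$ as a $K(\eta)$-linear combination of the differentials of the coefficients, one sees that ${\rm d}H_1\wedge\cdots\wedge{\rm d}H_k$ is a linear combination of $k$-fold wedges of those differentials, so functional independence of $H_1,\ldots,H_k$ forces at least $k$ functionally independent coefficients $a_1,\ldots,a_k\in K(\eta)$, each a first integral.

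Next I would factor each $a_j$ in the factorial ring $K[\eta]$ as $h_{a_j}\prod_i Q_{j,i}^{r_{j,i}}$, substitute into $\mathcal{X}(a_j)=0$, and use that the $Q_{j,i}$ are pairwise coprime monic irreducibles to deduce $Q_{j,i}\mid\mathcal{X}(Q_{j,i})$. Because $\mathcal{X}(\eta)=0$, each $\mathcal{X}(Q_{j,i})$ has strictly smaller degree in $\eta$ than $Q_{j,i}$, which forces $\mathcal{X}(Q_{j,i})=0$ and in turn $\mathcal{X}(h_{a_j})=0$. Extracting a maximal functionally independent group $b_1,\ldots,b_k$ from the factors $\{h_{a_j},Q_{j,i}\}$ (again of size at least $k$ by independence of $a_1,\ldots,a_k$), I reach the dichotomy.

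If these $b_1,\ldots,b_k$ all lie in $K$, they furnish $k$ functionally independent rational first integrals, which is the first alternative. Otherwise some $b_j$ is a nonconstant polynomial in $\eta$; equating coefficients in $\mathcal{X}(b_j)=0$ and using transcendence of $\eta$ shows that the $K$-coefficients $B_{j,i}$ of the $b_j$ are first integrals. A maximal functionally independent group of $\{\eta,B_{j,i}\}$ of the form $\{\eta,E_1,\ldots,E_{s'-1}\}$ must then have $s'\geq k$, since $b_1,\ldots,b_k$ are locally functions of its members; renaming $E_1,\ldots,E_{k-1}$ as $\widetilde{H}_1,\ldots,\widetilde{H}_{k-1}\in C(K,\mathcal{X})$ gives $\eta,\widetilde{H}_1,\ldots,\widetilde{H}_{k-1}$ functionally independent, which is the second alternative.

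The main obstacle will be the repeated bookkeeping on the sizes of the maximal functionally independent groups: at each of the three stages (from the $H_j$ to the coefficients $a_j$, from the $a_j$ to the factors $b_j$, and from the $b_j$ to the coefficients $B_{j,i}$) one must argue that independence descends through the linear-combination relations among the differentials so that the group never drops below $k$ members. Each such step rests on the same wedge-product computation, but one has to verify that the differentials involved genuinely span enough directions; this is precisely the analytic content that the paper stresses must be tracked separately from algebraic independence.
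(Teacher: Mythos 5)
Your proposal is correct and is essentially the paper's own argument: the paper derives Corollary \ref{Cor2} precisely by ``summarizing the proof of Proposition \ref{Prop4}'' with $t$ replaced by $\eta$, and your three stages (passing from the $H_j$ to the minimal-polynomial coefficients $a_j$, then to the factors $b_j$ in $K[\eta]$, then to their $K$-coefficients $B_{j,i}$), together with the wedge-product bookkeeping that keeps each maximal functionally independent group of size at least $k$, reproduce that proof step for step, including the use of $\mathcal{X}(\eta)=0$ to force $\deg_\eta \mathcal{X}(Q_{j,i})<\deg_\eta Q_{j,i}$ and hence $\mathcal{X}(Q_{j,i})=0$.
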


Now we turn to study the case that the transcendental element is not a first integral of $\mathcal X$.

\begin{prop}\label{Prop5}
Consider the tower of the elementary field extension $K\subset K(t) \subset L$ with $t$ transcendental over $K$ and $L$ an algebraic extension of $K(t)$. Let $H_1$, $H_2$, $\ldots$, $H_k$ $($$k\geq2$$)$ be the functionally independent first integrals of $\mathcal{X}$ in $L$. If $\mathcal{X}(t)\ne0$, then Theorem \ref{Thm Stronger}
holds.
\end{prop}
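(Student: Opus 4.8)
The plan is to merge the multi--integral bookkeeping of Proposition \ref{Prop4} with the degree analysis of Proposition \ref{Prop3} for the case $\mathcal X(t)\ne 0$, replacing the final use of Corollary \ref{Cor1} by its $k$--integral version, Corollary \ref{Cor2}. First I would record the minimal algebraic equations of $H_1,\ldots,H_k$ over $K(t)$ as in \eqref{dengshi k1}. By Lemma \ref{algebraic lemma} all their coefficients lie in $C(K(t),\mathcal X)$, and the exterior--derivative argument already used in the proof of Proposition \ref{Prop4} extracts from them a maximal functionally independent group $a_1,\ldots,a_k$ with ${\rm d}a_1\wedge\cdots\wedge{\rm d}a_k\ne 0$; thus $\mathcal X$ already has $k$ functionally independent first integrals in $K(t)$. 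Factorizing each coefficient as $a_j=h_{a_j}\prod_i Q_{j,i}^{r_{j,i}}$ in the factorial ring $K[t]$ and substituting into $\mathcal X(a_j)=0$ gives, exactly as in \eqref{dengshi K3}, the divisibilities $Q_{j,i}\mid\mathcal X(Q_{j,i})$.

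Next I would split according to the type of the transcendental generator $t$, mimicking Proposition \ref{Prop3}. If $t={\rm ln}v$ with $v\in K$, then $\deg_t\mathcal X(Q_{j,i})<\deg_t Q_{j,i}$, so the divisibility forces $\mathcal X(Q_{j,i})=0$ and, through the analogue of \eqref{dengshi1}, also $\mathcal X(h_{a_j})=0$; hence every element of $\Theta_2:=\{h_{a_j},Q_{j,i}\}$ is itself a first integral. Selecting a maximal functionally independent group $b_1,\ldots,b_k$ of $\Theta_2$ (the count is $\ge k$ by independence of $a_1,\ldots,a_k$), I would either find that all $b_j$ are rational, in which case we are done, or find some $b$ of positive $t$--degree. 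For such a $b$ the vanishing $\mathcal X(b)=0$ together with transcendence of $t$ forces, as in \eqref{dengshi5}, that its subleading coefficient $A$ satisfies $\mathcal X(A)+q\,\mathcal X(v)/v=0$; since $\mathcal X(v)\ne 0$ this makes $A$ a nonconstant rational function and $\widetilde t:=A+q\,{\rm ln}v$ a transcendental first integral with $K(\widetilde t)=K(t)$ by Proposition \ref{extension gonghsi}. Applying Corollary \ref{Cor2} with $\eta=\widetilde t$ then yields either $k$ rational first integrals or $\widetilde t$ together with $k-1$ rational first integrals $\widetilde H_1,\ldots,\widetilde H_{k-1}$; in either case we obtain the desired functionally independent first integrals of the required form.

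For $t=e^{v}$ with $v\in K$, the degree count instead gives $\deg_t Q_{j,i}=\deg_t\mathcal X(Q_{j,i})$, so $Q_{j,i}\mid\mathcal X(Q_{j,i})$ now yields $\mathcal X(Q_{j,i})=q_{j,i}\,\mathcal X(v)\,Q_{j,i}$ rather than vanishing. Here I would imitate the two subcases of Proposition \ref{Prop3}: if the maximal functionally independent group reduces to $\{h_{a_j}\}$ we obtain $k$ rational first integrals and are done; otherwise I fix some $a_j\ne h_{a_j}$ and build a single transcendental first integral $\widetilde t$ from it, of the shape $h_a e^{\ell v}$ when $t$ divides every irreducible factor of $a_j$, and of the shape $A_0^{-1}e^{q_1 v}$ — with $A_0$ the nonzero constant term of a factor $Q_1$ not divisible by $t$, shown nonconstant via the relation \eqref{dengshi e3} — otherwise. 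In both situations Proposition \ref{extension gonghsi} and Lemma \ref{tr gongshi} guarantee that $\widetilde t$ is transcendental over $K$ and that $L$ is algebraic over $K(\widetilde t)$, so Corollary \ref{Cor2} again supplies the remaining $k-1$ rational first integrals; since ${\rm ln}\widetilde t$ is a $\mathbb C$--multiple of $v$ plus a logarithm of a rational function, using ${\rm ln}\widetilde t$ in place of $\widetilde t$ produces first integrals of the form demanded by Theorem \ref{Thm Stronger}.

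The hardest part, as the paper emphasizes, will be the analytic bookkeeping of functional independence rather than the algebra. At each passage — from the coefficients $\Theta_1$ to the factors $\Theta_2$, and, in the logarithmic case, on to the subleading coefficients — one must verify through the wedge--product computations that a maximal functionally independent group still has cardinality at least $k$; combining this with the nontrivial degree estimates forced by $\mathcal X(t)\ne 0$ is more delicate than in Proposition \ref{Prop4}, where every factor was automatically a first integral. A second point requiring care is checking that the single constructed first integral $\widetilde t$ genuinely meets the hypotheses of Corollary \ref{Cor2}: transcendence over $K$ and algebraicity of $L$ over $K(\widetilde t)$ come from Proposition \ref{extension gonghsi} and Lemma \ref{tr gongshi}, but one must also confirm that $H_1,\ldots,H_k$ remain functionally independent after the change of generator, which is precisely the analytic independence issue flagged in the introduction.
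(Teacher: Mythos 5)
Your proposal is correct and follows essentially the same route as the paper's own proof: the same reduction through the coefficient set $\Theta_1$, the factorization in $K[t]$ with the divisibility $Q_{j,i}\mid\mathcal X(Q_{j,i})$, the case split $t=\ln v$ (degree drop forcing $\mathcal X(Q_{j,i})=0$, then $\widetilde t=B_{1,\alpha_1-1}+\alpha_1\ln v$) versus $t=e^{v}$ (the eigenvalue relation $\mathcal X(Q_{j,i})=q_{j,i}\mathcal X(v)Q_{j,i}$, then $\widetilde t=h_{a_1}e^{\ell v}$ or $\widetilde t=A_0^{-1}e^{q_{1,1}v}$), and the final appeal to Corollary \ref{Cor2} with the new generator. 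The functional-independence bookkeeping you flag is handled in the paper exactly as you propose, by inheriting the maximal functionally independent groups from Proposition \ref{Prop4} and delegating the rest to Corollary \ref{Cor2}.
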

\begin{proof}
With the same notations as in the proofs of Propositions \ref{Prop3} and \ref{Prop4},  set $q_{j,i}:={\rm deg}Q_{j,i}$ for  $j=1,\ldots,k$, $i=1,\ldots,m_j$.

\noindent {\it Case }1. $t={\rm ln}v$. Since $Q_{j,i}$'s are monic and $\mathcal{X}(t)=\mathcal{X}(v)/v$, some direct computations show that ${\rm deg}Q_{j,i}>q_{j,i}-1\geq {\rm deg}\mathcal{X}(Q_{j,i})$. Since $Q_{j,i}$, $\mathcal{X}(Q_{j,i})\in K[t]$ and $Q_{j,i}|\mathcal{X}(Q_{j,i})$, it forces that $\mathcal{X}(Q_{j,i})=0$ for all $j=1,\ldots,k$, $i=1,\ldots,m_j$.
Then, the equality \eqref{dengshi K3} induces that $\{h_{a_1},\ldots,h_{a_k}\}\subset C(K,\mathcal{X})$, and so $\{b_1,\ldots,b_k\}\subset C(K(t),\mathcal{X})$.

If $\{b_1,\ldots,b_k\}=\{h_{a_1},\ldots,h_{a_k}\}$, i.e. $h_{a_1},\ldots,h_{a_k}$ are functionally independent rational first integrals of the vector field $\mathcal X$, we are done.

If $\{b_1,\ldots,b_k\}\ne \{h_{a_1},\ldots,h_{a_k}\}$,  we assume without loss of generality that ${\rm deg}b_1\geq 1$. Associated with the equality \eqref{dengshi K5}, substituting the expression of $b_1$ into $\mathcal X(b_1)=0$ yields
\begin{equation}\label{dengshi K6}
0=\mathcal{X}(b_1)=\left(\mathcal{X}({B_{1,{\alpha _1} - 1}}) + {\alpha _1}\mathcal{X}(v)/v\right){t^{{\alpha _1} - 1}} +  \ldots  + \left(\mathcal{X}({B_{1,0}}) + {B_{1,1}}\mathcal{X}(v)/v\right).
\end{equation}
The transcendence of $t$ induces that $\mathcal{X}(B_{1,\alpha_1-1}+\alpha_1{\rm ln}v)=0$. Set $\widetilde t=B_{1,\alpha_1-1}+\alpha_1{\rm ln}v$. Then $\widetilde t$ is also transcendental over $K$, and $K(\widetilde t)(=K(t))\subset L$. Hence, $L$ is also an algebraic extension of $K(\widetilde t)$. Applying Corollary \ref{Cor2} can complete the proof.

{
\noindent {\it Case }2. $t=e^v$. Since $Q_{j,i}$'s are monic and $\mathcal{X}(t)/t=\mathcal{X}(v)$, some direct computations show that  ${\rm deg}Q_{j,i}={\rm deg}\mathcal{X}(Q_{j,i})$. By $Q_{j,i}|\mathcal{X}(Q_{j,i})$ implies that $q_{j,i}\mathcal{X}(v)Q_{j.i}=\mathcal{X}(Q_{j,i})$.

\noindent{Subcase} 2.1. $\{a_1,\ldots,a_k\}=\{h_{a_1},\ldots,h_{a_k}\}$, which means that $h_{a_1},\ldots, h_{a_k}$ are functionally independent rational first integrals of $\mathcal{X}$, we are done.

\noindent{Subcase} 2.2. $\{a_1,\ldots,a_k\}\ne\{h_{a_1},\ldots,h_{a_k}\}$. We assume without loss of generality that $a_1 \ne h_{a_1}$.

If, for all $i=1,\ldots,m_1$, $t$ divides $Q_{1,i}$, then $a_1=h_{a_1}t^{\ell}$, for some non-zero integer $\ell$. By assumption that $\mathcal{X}(t)\ne 0$, we have $\mathcal{X}(h_{a_1})\ne 0$. Therefore, taking $\widetilde t=h_{a_1}e^{\ell v}$, $\widetilde t$ is a transcendental first integral of the vector field, so is ${\rm ln}\widetilde t=\ell v +{\rm ln}h_{a_1}$. By Lemma \ref{tr gongshi}, $K(t)$ is algebraic over $K(\widetilde t)$, and so is $L$. The remaining proof follows from Corollary \ref{Cor2}.

If some $Q_{1,j}$ cannot be divided by $t$, we set that
${\rm deg}Q_{1,1}=q_{1,1}\geq 1$, $t$ is not a factor of $Q_{1,1}$ and
\begin{equation}\label{dengshi e1}
Q_{1,1}=t^{q_{1,1}}+A_{q_{1,1}}t^{q_{1,1}-1}+\cdots+A_1t+A_0,
\end{equation}
where $A_{q_{1,1}},\ldots,A_0 \in K$, and by the assumption, $A_0 \ne 0$.
Combining with $q_{1,1}\mathcal{X}(v)Q_{1.1}=\mathcal{X}(Q_{1,1})$ yields that
\begin{equation}\label{dengshi e2}
q_{1,1}\mathcal{X}(v)Q_{1,1}=(q_{1,1}\mathcal{X}(v))t^{q_{1,1}}+(\mathcal{X}(A_{q_{1,1}-1}+(q_{1,1}-1)A_{q_{1,1}-1}\mathcal{X}(v))t^{q_{1,1}-1}+\cdots+\mathcal{X}(A_0),
\end{equation}
which shows that $q_{1,1}\mathcal{X}(v)A_0=\mathcal{X}(A_0)$. Note that, $\mathcal{X}(A_0)\ne 0$, otherwise, $\mathcal{X}(v)=0$, a contradiction. Taking $\widetilde t={A_0}^{-1}e^{q_{1,1} v}$. Then, $\widetilde t$ is a transcendental first integral of the vector field $\mathcal{X}$, and $K(t)$ is algebraic over $K(\widetilde t)$, and so is $L$. Replacing $\eta$ in Corollary \ref{Cor2} by $\widetilde t$ reaches the result that we want to prove.

It completes the proof of the proposition.}
\end{proof}

Up to now we have proved Theorem \ref{Thm Stronger} for $K\subset K(t)\subset L$ with $t$ transcendental over $K$ and $L$ an algebraic extension of $K(t)$.
Next we prove Theorem \ref{Thm Stronger} via induction on the transcendental degree tr$[L:K]$ of the elementary field extension $L/K$. Some ideas on induction follows from \cite{RischRObert,Prelle-Singer}.
Assume that $H_1,\ldots,H_k$, $1\leq k \leq n-1$, are $k$ functionally  independent first integrals of the vector field $\mathcal X$ in $L$.

If ${\rm tr}[L:K]=1$, the proof follows from Propositions \ref{Prop1}, \ref{Prop4} and \ref{Prop5}. Here we have used the fact that the class of algebraic extensions is distinguished (see \cite[Section 5.1, Proposition 1.7]{Lang}).

To apply induction on the transcendental degree ${\rm tr}[L:K]$, we assume that Theorem \ref{Thm Stronger} holds when ${\rm tr}[L:K]=m-1$ for $2 \leq m \in \mathbb N$.

For ${\rm tr}[L:K]=m$, we claim that the elementary field extension $L/K$ can be written without loss in the tower form
\[
K_0:=K\subset K_1:=K(t_1)\subset K_2:=K_1(t_2)\subset \cdots \subset K_N:=K_{N-1}(t_{N}) \subset L,
\]
where $t_j$ is either an algebraic element or a transcendental element over $K_{j-1}$ for $j=1,\ldots,N$, and $L$ is algebraic over $K(t_N)$ ($N\geq m$). {Note that, when $L$ is not algebraic over $K_{N-1}(t_N)$, the definition of the tower of the elementary field extension shows that $L$ will be written as $K_{N}(t_{N+1})$ with transcendental element $t_{N+1}$ of $K_{N}=K_{N-1}(t_N)$, which is still akin to the assumption since $L=K_{N}(t_{N+1})$ is algebraic over itself }.

For convenience, we assume without loss that $t_1$ is transcendental over $K$.

Combining with Lemma \ref{tr gongshi}, direct calculations show that tr$[L:K(t_1)]=m-1$. Replacing $K$ with $K(t_1)$ in Theorem \ref{Thm Stronger}. By the induction assumption on ${\rm tr}[L:K(t_1)]=m-1$, we know that there exist
\[
\widetilde{H}_j={\widetilde w_{0,j}} + \sum\limits_{i= 1}^{{\widetilde m_j}} {\widetilde{c_{i,j}}\ln {\widetilde w_{i,j}}},\quad j=1,\ldots,k,
\]
with $\widetilde w_{0,j},\ldots, \widetilde w_{\widetilde m_j,j}\in \overline{K(t_1)}\cap L$,  and $\widetilde c_{0,j},\ldots,\widetilde c_{\widetilde m_j,j} \in \mathbb C$, $j=1,\ldots,k$,
such that $\widetilde{H}_1,\ldots, \widetilde{H}_k$ are $k$ functionally independent first integrals of the vector field $\mathcal X$.
Recall that $\overline{K(t_1)}$ is the algebraic closure of $K(t_1)$.
{Set $\widetilde K:=K(t_1,\widetilde w_{0,1},\ldots, \widetilde w_{\widetilde m_k,k})$. It is obvious that $\widetilde K \subset L$, and $K (\subset K(t_1)) \subset \widetilde K$ is an elementary extension. Moreover, Lemma \ref{tr gongshi} implies} that tr$[\widetilde K:K]$=tr$[\widetilde K : K(t_1)]$+tr$[K(t_1):K]=1$, where we have used the fact that tr$[\widetilde K:K(t_1)]$=0.
Replacing $L$, in Theorem \ref{Thm Stronger}, with $\widetilde K$, induces that $\widetilde{H}_1 ,\ldots, \widetilde{H}_k$ satisfy the conditions in Theorem \ref{Thm Stronger}, and therefore applying Theorem \ref{Thm Stronger} to $\widetilde{H}_1,  \ldots,\widetilde{H}_k$ reaches that there exist
\[
\overline{H}_j={w_{0,j}} + \sum\limits_{i = 1}^{{m_j}} {{c_{i,j}}\ln {w_{i,j}}},\quad j=1,\ldots,k,
\]
with $ w_{0,j},\ldots, w_{m_j,j}\in \overline{K} \cap \widetilde K \subset \overline{K}\cap L$, and $c_{0,j},\ldots,c_{m_j,j} \in \mathbb C$, $j=1,\ldots,k$, such that the vector field $\mathcal X$ has the $k$ functionally independent first integrals  $\overline{H}_1,\ldots,\overline{H}_k$, which are of the desired form.
Recall again that $\overline K$ is the algebraic closure of $K$.

It completes the proof of the theorem. \qed

Having proved Theorem \ref{Thm Stronger} also verifies Theorem \ref{Theorem1}.
With this theorem in hand, we can prove Theorem \ref{Theorem2}.

\section{Proof of Theorem \ref{Theorem2}}\label{S3}

In the proof of Theorem \ref{Theorem2} we need more knowledge from the theories of  field extensions and of Galois groups than those in Section \ref{Pre}. For reader's convenience and completeness, we introduce some necessary ones here, for a reference, see e.g. \cite[Sections 5.3, 5.4, 6.1]{Lang}.
Consider an algebraic field extension $F/E$. An element $\beta\in F$ is said to be {\it separable} over $E$ if the characteristic polynomial of $\beta$ has no multiple  zeros in $F$. If all the elements of $F$ is separable over $E$, we call $F$ {\it separable over $E$} or say that {\it the extension $F/E$ is separable}.

We call an algebraic field extension $F/E$  {\it normal} if each {irreducible} polynomial of $E[X]$ which has a zero in $F$ splits into product of linear factors in $F[X]$. The {\it normal closure} of an algebraic field extension $F/E$ is a field extension $\overline{F}^N$ of $F$ such that $\overline{F}^N/E$ is normal and $\overline{F}^N$ is the minimal field extension satisfying this property. {See \cite[Section 6.1]{Lang} for the existence of normal closure of a given field.}

A bijective map $\phi: E \longrightarrow E$ is called a {\it field automorphism} if it satisfies, for any $\alpha$, $\beta$ in $E$, $\phi(\alpha+\beta)=\phi(\alpha)+\phi(\beta)$ and $\phi(\alpha {\beta})=\phi(\alpha){\phi(\beta)}$. We say that $\phi$ is a field automorphism of $F$ over $E$, if it fixes all the elements of $E$. The set of all field automorphisms of $F$ over $E$ forms a group under the composition of maps. This group is called a {\it Galois group}, denoted by Gal$(F/E)$. Refer to \cite[Section 6.1, Corollary 1.6]{Lang}, when $F/E$ is an algebraic extension, Gal$(\overline{F}^N/E)$ is finite. For more information and results related to the notions mentioned above, see e.g. \cite{Lang}.

To prove Theorem \ref{Theorem2} we first characterize properties of the derivatives of the elements in an algebraic extension.   Recall again that for a differential field $K$, $\overline K$ is the algebraic closure of $K$.

\begin{lem} \label{Lemma3.1}
Consider the differential field $(K,\Delta)$ with $K=\mathbb C(x)$, $x=(x_1,\ldots,x_n)$ and $\Delta=\{\partial_{x_1},\ldots,\partial_{x_n}\}$.
If $\omega$ is an algebraic element, i.e. $\omega \in \overline{K}$, then for any $\delta \in \Delta$, $\delta \omega$ is also an algebraic element over $K$.
\end{lem}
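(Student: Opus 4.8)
The plan is to prove that if $\omega\in\overline{K}$ is algebraic over $K=\mathbb C(x)$, then $\delta\omega$ is again algebraic over $K$ for every $\delta\in\Delta$, by differentiating the minimal polynomial of $\omega$ and solving for $\delta\omega$ rationally in terms of $\omega$. Concretely, let $\omega$ satisfy its monic minimal polynomial
\begin{equation}\label{eq:minpoly-omega}
P(\omega)=\omega^r+a_{r-1}\omega^{r-1}+\cdots+a_1\omega+a_0=0,\qquad a_0,\ldots,a_{r-1}\in K.
\end{equation}
First I would apply a fixed derivation $\delta\in\Delta$ to both sides of \eqref{eq:minpoly-omega}. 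Since $\delta$ is a derivation and $\delta a_i\in K$ for each $i$ (because $\delta$ maps $K=\mathbb C(x)$ into itself), the product rule gives
\begin{equation}\label{eq:diff-minpoly}
P'(\omega)\,\delta\omega+\sum_{i=0}^{r-1}(\delta a_i)\,\omega^{i}=0,
\end{equation}
where $P'(\omega)=r\omega^{r-1}+(r-1)a_{r-1}\omega^{r-2}+\cdots+a_1$ is the formal derivative of $P$ evaluated at $\omega$.

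The key step is to observe that $P'(\omega)\neq0$. This is exactly where minimality and the characteristic-zero setting enter: if $P'(\omega)=0$, then $\omega$ would be a root of $P'$, a polynomial over $K$ of degree $r-1<r$, contradicting that $r=\deg P$ is the minimal degree of an annihilating polynomial of $\omega$. (In characteristic zero $P'$ is not identically zero because its leading coefficient is $r\neq0$, so $\deg P'=r-1$ is genuinely smaller than $r$; equivalently, the minimal polynomial is separable, so $\omega$ cannot be a common root of $P$ and $P'$.) Hence $P'(\omega)$ is a nonzero element of the field $K(\omega)=K[\omega]$, and from \eqref{eq:diff-minpoly} I can solve
\begin{equation}\label{eq:deltaomega}
\delta\omega=-\frac{1}{P'(\omega)}\sum_{i=0}^{r-1}(\delta a_i)\,\omega^{i}.
\end{equation}
The right-hand side lies in $K(\omega)$, since $P'(\omega)$ is an invertible element of that field and the numerator is a $K$-polynomial in $\omega$. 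Because $K(\omega)$ is a finite (hence algebraic) extension of $K$, every element of $K(\omega)$ is algebraic over $K$; in particular $\delta\omega\in K(\omega)\subset\overline{K}$ is algebraic over $K$. As $\delta\in\Delta$ was arbitrary, this holds for all $\delta\in\Delta$, which is the desired conclusion.

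I expect the only genuine obstacle to be the verification that $P'(\omega)\neq0$, and the plan above resolves it via minimality of $P$ together with the characteristic-zero hypothesis (equivalently, separability of algebraic extensions of $\mathbb C(x)$, which is automatic in characteristic zero). Everything else is the routine differentiation in \eqref{eq:diff-minpoly} and the inversion \eqref{eq:deltaomega} inside the field $K(\omega)$; I would also note explicitly that $\delta$ preserves $K=\mathbb C(x)$, so the coefficients $\delta a_i$ genuinely remain in $K$, which is what makes the numerator in \eqref{eq:deltaomega} a $K$-linear combination of powers of $\omega$. No appeal to Galois theory is needed for this lemma; it is a self-contained computation that will later feed into the Galois-theoretic argument for Theorem \ref{Theorem2}.
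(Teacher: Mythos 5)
Your proof is correct and follows essentially the same route as the paper: differentiate the minimal polynomial of $\omega$, solve for $\delta\omega$ as a ratio whose denominator is the formal derivative $P'(\omega)$, and conclude $\delta\omega\in K(\omega)\subset\overline{K}$. Your justification that $P'(\omega)\neq 0$ via minimality in characteristic zero is a welcome explicit detail that the paper only asserts implicitly.
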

\begin{proof} In \cite[pp. 132--133]{Zhang2017} there exists a proof for two dimensional case. Here for completeness and it is short, we present a proof here.
By the assumption, let
\begin{equation}\label{minimal Eq Sec2}
{X^m} + {\beta _{m - 1}}{X^{m - 1}} +  \ldots  + {\beta _1}X + {\beta _0}
\end{equation}
be the minimal algebraic equation of $\omega$, where $\beta_0,\beta_1,\ldots,\beta_{m-1} \in K$. Since $\beta_i$ is rational, so is $\delta \beta_i$ for all $\delta \in \Delta$, $i=0, 1,\ldots,m-1$. Denote by $\widetilde K$ the minimal algebraic extension containing $\omega$ of $K$. Differentiating the equation \eqref{minimal Eq Sec2} with respect to $x_j$, $j\in \{1,\ldots,n\}$, gives
\[{\partial _{{x_j}}}\omega  =  - \frac{{{\omega ^{m - 1}}{\partial _{{x_j}}}{\beta _{m - 1}} +  \cdots  + {\partial _{{x_j}}}{\beta _1}\omega  + {\partial _{{x_j}}}{\beta _0}}}{{m{\omega ^{m - 1}} + (m - 1){\beta _{m - 1}}{\omega ^{m - 2}} +  \cdots  + {\beta _1}}}.
\]
Note that the denominator of the right hand side of this last equation does not identically vanish. This shows that ${\partial _{{x_j}}}\omega \in \overline K$.
\end{proof}

\begin{proof}[Proof of Theorem $\ref{Theorem2}$]
If ${\rm div}\mathcal{X}=0$, the theorem is trivial. We consider the case ${\rm div}\mathcal{X}\not\equiv 0$. By definition, a Jacobian multiplier of $\mathcal X$ is a nonconstant smooth function $J$ satisfying ${\rm div}(J\mathcal X)\equiv 0$, i.e. $\mathcal X(J)=-J\mbox{\rm div}\mathcal X$.

By the assumption of the theorem, one gets from Theorem \ref{Theorem1} that the vector field $\mathcal X$ has the functionally independent first integrals of the form
\begin{equation}\label{dengshi21}
{H}_j = {u_{0,j}} + \sum\limits_{i = 1}^{{n_j}} {{c_{i,j}}\ln {u_{i,j}}},\quad j=1,\ldots,n-1,
\end{equation}
where $u_{0,j},\ldots ,u_{n_j,j}\in \overline{K}\cap L $, $j=1,\ldots,n-1$, which are defined in $\Omega:=\mathbb C^n\setminus \Omega_0$, with $\Omega_0$ a zero Lebesgue measure subset of $\mathbb C^n$. Taking any derivation $\delta \in \Delta$ acting on the equality \eqref{dengshi21} gives
\[
\delta H_j=\delta{u_{0,j}} + \sum\limits_{i = 1}^{{n_j}} {c_{i,j}}\frac{\delta{u_{i,j}}}{{{u_{i,j}}}}.
\]
Let $F$ be the minimal algebraic extension containing the set $\{u_{0,j},\ldots ,u_{n_j,j}$:\ \  $j=1,\ldots,n-1$\} of $K=\mathbb C(x_1,\ldots,x_n)$.
By Lemma \ref{Lemma3.1}, we know that for all $\delta \in \Delta$, $\delta H_j\in F$.
To simplify notations, in what follows we denote $\partial_{x_s}$ by $\partial_s$ and $(\partial_1,\ldots,\partial_n)$ by $\partial$.
The first integrals $H_j$'s satisfy the equations
\begin{equation}\label{dengshi22}
\mathcal{X}(H_j)(x) \equiv 0,\quad x\in \Omega,\quad j=1,\ldots,n-1.
\end{equation}
By the functional independence of $H_1,\ldots,H_{n-1}$, without loss of generality, we set
\[
\Lambda(x):={\rm det}(\partial_1\mathcal{H}_1(x),\ldots,\partial_{n-1}\mathcal{H}_{n-1}(x))\ne 0,\quad x\in \Omega,
\]
where
\[
\begin{split}
\mathcal{H}(x):&=(H_1(x),\ldots,H_{n-1}(x))^{T},\\
\partial_i\mathcal{H}(x):&=(\partial_iH_1(x),\ldots,\partial_iH_{n-1}(x))^{T},\quad i=1,\ldots,n.
\end{split}
\]
with $T$ representing the transpose of a matrix.
For $s=1,\ldots,n-1$, define
\[
\Lambda_s(x) :={\rm det}(\partial_1\mathcal{H}(x),\ldots,\partial_{s-1}\mathcal{H}(x),\partial_n\mathcal{H}(x),
\partial_{s+1}\mathcal{H}(x),\ldots,\partial_{n-1}\mathcal{H}(x)).
\]
By the properties on $\partial_s H_j$ obtained above, it follows easily that $\Lambda$ and $\Lambda_s$, $s=1,\ldots,n-1$, belong to $F$.

Since
\begin{align*}
\partial_n \Lambda = &\sum\limits_{s = 1}^{n - 1} {\det \left( {{\partial _1}\mathcal{H}, \ldots ,{\partial _{s - 1}}\mathcal{H},{\partial _n}{\partial _s}\mathcal{H},{\partial _{s + 1}}\mathcal{H}, \ldots ,{\partial _{n - 1}}\mathcal{H}} \right)},\\
\partial_s\Lambda_s = & \det \left( {{\partial _1}\mathcal{H}, \ldots ,{\partial _{s - 1}}\mathcal{H},{\partial _s}{\partial _n}\mathcal{H},{\partial _{s + 1}}\mathcal{H}, \ldots ,{\partial _{n - 1}}\mathcal{H}} \right),\\
&+\sum\limits_{\ell = 1,\ell  \ne s}^{n - 1} \det \left( {\partial _1}\mathcal{H}, \ldots ,{\partial _{\ell  - 1}}\mathcal{H},{\partial _s}{\partial _\ell }\mathcal{H},{\partial _{\ell  + 1}}\mathcal{H}, \ldots ,{\partial _{s - 1}}\mathcal{H},\right.\\
&\qquad\qquad \qquad \qquad  \left.{\partial _n}\mathcal{H},{\partial _{s + 1}}\mathcal{H}, \ldots ,{\partial _{n - 1}}\mathcal{H} \right),
\end{align*}
we have
\begin{align*}
 \partial_1\Lambda_1 & +\ldots+\partial_{n-1}\Lambda_{n-1}-\partial_n\Lambda\\
=&\sum\limits_{s = 1}^{n - 1} \sum\limits_{\ell  = 1,\ell  \ne s}^{n - 1} \det \left( {\partial _1}\mathcal{H}, \ldots ,{\partial _{\ell  - 1}}\mathcal{H},{\partial _s}{\partial _\ell }\mathcal{H},{\partial _{\ell  + 1}}\mathcal{H}, \ldots ,\right.\\
&\qquad\qquad \qquad \qquad \left.{\partial _{s - 1}}\mathcal{H},{\partial _n}\mathcal{H},{\partial _{s + 1}}\mathcal{H}, \ldots ,{\partial _{n - 1}}\mathcal{H} \right) \\
=&0,
\end{align*}
where in the last equality we have used the fact that
\begin{align*}
	&\det \left( {{\partial _1}\mathcal{H}, \ldots ,{\partial _{\ell  - 1}}\mathcal{H},{\partial _s}{\partial _\ell }\mathcal{H},{\partial _{\ell  + 1}}\mathcal{H}, \ldots ,{\partial _{s - 1}}\mathcal{H},{\partial _n}\mathcal{H},{\partial _{s + 1}}\mathcal{H}, \ldots ,{\partial _{n - 1}}\mathcal{H}} \right)\\
	& =  - \det \left( {{\partial _1}\mathcal{H}, \ldots ,{\partial _{\ell  - 1}}\mathcal{H},{\partial _n}\mathcal{H},{\partial _{\ell  + 1}}\mathcal{H}, \ldots ,{\partial _{s - 1}}\mathcal{H},{\partial _\ell}{\partial _s }\mathcal{H},{\partial _{s + 1}}\mathcal{H}, \ldots ,{\partial _{n - 1}}\mathcal{H}} \right).
	\end{align*}
Solving the equation \eqref{dengshi22} by the Cramer's rule gives
\[
{P_s}({x_1}, \ldots ,{x_n}) =  - \frac{\Lambda_s}{\Lambda}{P_n}({x_1}, \ldots ,{x_n}),\quad s=1,\ldots,n-1.
\]
Recall that $P_s$'s are the components of the vector field $\mathcal X$, defined in  \eqref{vector field}.
Set $h(x):=P_n/ \Lambda=-P_s/ \Lambda_s$, $s=1,\ldots,n-1$. Then $h\in F$. Direct calculations show that
\begin{align*}
\mathcal{X}(h)&=P_1\partial_1h+\ldots P_n\partial_nh
\\
& =P_1\partial_1\left(\frac{-P_1}{\Lambda_1}\right)+\ldots +P_{n-1}\partial_1\left(\frac{-P_{n-1}}{\Lambda_{n-1}}\right)+P_n\partial_n\left(\frac{P_n}{\Lambda}\right)\\
&=h(\partial_1(P_1)+\ldots+\partial_n(P_n))+h^2(\partial_1\Lambda_1+\ldots+\partial_{n-1}\Lambda_{n-1}-\partial_n\Lambda)=h{\rm div}(\mathcal{X}).
\end{align*}
This shows that $J=h^{-1}\in F$ and it is a Jacobian multiplier of the vector field $\mathcal X$, because
\begin{equation}\label{Jacobian multiplier}
 \mathcal{X}(J)=-J{\rm div}(\mathcal{X}).
\end{equation}

Let $\overline{F}^N$ be the normal closure of $F$, and $\mathcal{G}$ the Galois group formed by the automorphisms of $\overline{F}^N$ fixing $K$. According to Lang \cite[Chapter VI, Theorem 1.8]{Lang}, proved by Artin, the group $\mathcal{G}$ is of finite order, which is denoted by $N$. Moreover, $N\leq [F:K]$, the degree of the algebraic extension of the field.
 Rosenlicht \cite{M.Rosenlicht} proved that the $K$--automorphisms of $\overline{F}^N$ commute with the derivations on $\overline{F}^N$. So, for any $\sigma \in \mathcal{G}$, we get from \eqref{Jacobian multiplier} that
\[
\mathcal{X}(\sigma J)=-\sigma(J){\rm div}\mathcal{X}.
\]
This further shows that
\[
\mathcal{X}(\widetilde J)=-\widetilde J {\rm div}\mathcal{X},
\]
where $\widetilde J=\frac{1}{N}{\rm trace}J$, with ${\rm trace} J:=\sum\limits_{\sigma  \in \mathcal{G}} {\sigma (J)}$.
Note that for any $\sigma_0 \in \mathcal{G}$, we have
\[\sigma_0(\widetilde J)=\frac{1}{N}\sigma_0({\rm trace}J)=\frac{1}{N}{\rm trace}J=\widetilde J.\]
This implies that $\widetilde J \in K=\mathbb C(x_1,\ldots,x_n)$. So far, we have proved that the vector field $\mathcal X$ has a rational Jacobian multiplier.

It completes the proof of the theorem.
\end{proof}


\section*{Acknowledgements}
This work is partially supported by National Key R$\&$D Program of China grant
number 2022YFA1005900.

The second author is also partially supported by the National Natural Science Foundation of China (NSFC) under Grant Nos. 12071284 and 12161131001. 

\vspace{0.1in}

\end{document}